\theoremstyle{definition} 
\newtheorem{theorem}{Theorem}[section]
\newtheorem{lemma}[theorem]{Lemma}
\newtheorem{proposition}[theorem]{Proposition}
\newtheorem{definition}[theorem]{Definition}
\newtheorem{example}[theorem]{Example}
\newtheorem{remark}[theorem]{Remark}
\newenvironment{manualtheorem}[1]{%
  \manualtheoreminner
}{\endmanualtheoreminner}
\newenvironment{manualdefinition}[1]{%
  \manualdefinitioninner
}{\endmanualdefinitioninner}
\newcommand{\interior}[1]{%
  {\kern0pt#1}^{\mathrm{o}}%
}
\newcommand{\Q}{\mathbb{Q}}
\newcommand{\Spec}{\text{Spec}}
\newcommand{\Ass}{\text{Ass}}
\newcommand{\height}{\text{ht}}
\newcommand{\ann}{\text{ann}}
\begin{document}

\title{Gluing associated prime ideals of small height}
\author{S. Loepp and Liz Ostermeyer}

\maketitle

\begin{abstract}
    Let $B$ be a local (Noetherian) ring and suppose that  $B$ has $n$ associated prime ideals where $n \geq 2$. We identify sufficient conditions for there to exist a local (Noetherian) subring $S$ of $B$ such that $S$ and $B$ have the same completion and $S$ has exactly $n - 1$ associated prime ideals. We include applications and consequences of this result.
\end{abstract}

%

\section{Introduction}

Prime ideals are, of course, fundamental objects in commutative algebra. Studying the set of prime ideals of a ring $R$ (i.e. its prime spectrum) provides insight into the ring itself. 
%
%
%
In this paper we focus on the set of associated prime ideals of a local ring. 
When we say a ring is \emph{local}, we mean that it is Noetherian and has a unique maximal ideal. Since local rings are Noetherian, the set of associated prime ideals of a local ring is finite. 

It is worth noting that, given a Noetherian ring $R$, a prime ideal $Q$ is an associated prime ideal of $R$ if and only if $Q$ is the annihilator of some element $r \in R$. This characterization of an associated prime ideal provides insight into one of the reasons for studying associated prime ideals, which is that, for a ring $R$, the set of zero divisors of $R$ is exactly the union of the associated prime ideals of $R$. In other words, understanding the set of associated prime ideals of a ring gives us insight into the set of zerodivisors of a ring.  Additionally, the set of minimal prime ideals of a ring are a key component of the overall structure of the prime spectrum, and because all minimal prime ideals are associated prime ideals, our study of associated prime ideals has direct consequences for understanding prime spectra, more generally. This insight can inform our understanding of a ring's structure. 

Before we address the results of this paper in further detail, we make a few remarks on notation and convention. All rings referenced here will be commutative rings with unity. If a ring has a unique maximal ideal but is not necessarily Noetherian, we call it a quasi-local ring. We denote a local (or a quasi-local) ring $R$ with maximal ideal $M$ as $(R, M)$. 
The annihilator of an element $r$ in the ring $R$ will be denoted by $\ann_R(r)$. The prime spectrum of $R$ will be denoted $\Spec(R)$, while the set of associated prime ideals of $R$ will be denoted $\Ass(R)$. If $(R,M)$ is a local ring, then we use $\widehat{R}$ to denote the $M$-adic completion of $R$.

We now provide more details regarding our main result. Theorem \ref{bigassthm}, which we formally state after providing the necessary background, can be applied to a certain class of local rings $B$ whose associated prime ideals have sufficiently small height. Given such a $B$, and letting $\{Q_1,Q_2, \ldots, Q_n\}$ where $n \geq 2$ be the set of associated prime ideals of $B$, we can select two associated prime ideals of $B$ satisfying certain conditions.  Without loss of generality, call these two associated prime ideals $Q_1$ and $Q_2$. In Theorem \ref{bigassthm}, we show the existence of a local subring $S$ of $B$ such that $S$ has exactly $n - 1$ associated prime ideals and $S \cap Q_1 = S \cap Q_2$. In addition, the theorem ensures that $S$ and $B$ have the same completion. In Section \ref{Applications} we show that, because of this, the set of associated primes of $S$ is exactly $\{ S \cap Q_1 = S \cap Q_2 , S \cap Q_3, \dots, S \cap Q_n\}$. We can therefore think of $S$ as a subring of $B$ in which the associated prime ideals $Q_1$ and $Q_2$ are ``glued'' together. Although our main result only shows we can ``glue'' two associated prime ideals of $B$, it is natural to wonder if we can repeat this gluing operation. In fact we can, and in Section \ref{Applications} we demonstrate that this process can be iterated to form a chain 
%
of local rings $S_1 \supseteq S_2 \supseteq \dots \supseteq S_{n}$ such that, if $\{Q_1, \dots, Q_{n}\}$ are the associated prime ideals of $S_1$, then $S_i$ has exactly $n-i + 1$ associated prime ideals. We also show that, because $S_i$ and $S_j$ have the same completion for any $i,j \in \{1, 2, \dots, n \}$, 
\[\Ass(S_i)=\{S_i \cap Q_1 = S_i \cap Q_2 = \dots = S_i \cap Q_{i}, S_i \cap Q_{i+1}, \dots, S_i \cap Q_{n}\}.\] As $i$ increases, $S_i$ has fewer associated prime ideals, and hence, fewer prime ideals consisting of exclusively zerodivisors.  Therefore, one could think of this chain in some sense as a chain of subrings of $S_1$ that are getting ``closer and closer'' to being an integral domain.  In Section \ref{Applications}, we show that the last ring in the chain, $S_{n}$, is, in fact, an integral domain.

In constructing such a chain of subrings, we are proving the existence of certain associated prime ideal structures of subrings of a given local ring. In particular, this provides insight into the relationship between the set of associated prime ideals of a local ring and the set of associated prime ideals of its completion. We discuss this in more detail in Section \ref{Applications}.

The result presented in this article is a generalization of a result in \cite{mg}. In particular, the following is a consequence of Theorem 2.14 in \cite{mg}. Let $(B, M)$ be a reduced local ring containing the rationals such that $B/M$ is uncountable and $|B| = |B/M|$, and let $\Ass(B) = \{Q_1,Q_2, \ldots ,Q_n\}$ with $n \geq 2$. Then there exists a reduced local ring $(S, S \cap M)$ such that 
\medskip
\begin{enumerate}
    \item $S$ contains the rationals
    \item $ S \subseteq B $
    \item $\widehat{B}=\widehat{S}$
    \item $S/(S \cap M)$ is uncountable and $|S| = |S/(S \cap M)|$,
    \item $S \cap Q_1 = S \cap Q_2$.
\end{enumerate}
\medskip

Our goal is to show a more general version of this result. In addition to the properties mentioned above, the ring $S$ in Theorem 2.14 in \cite{mg} also satisfies the property that there is a containment-preserving bijection from the prime ideals of $B$ of positive height and the prime ideals of $S$ of positive height. As $B$ is reduced, this means that there is a containment-preserving bijection from the prime ideals of $B$ that are not associated prime ideals and the prime ideals of $S$ that are not associated prime ideals. Since we are not concerned in this article with prime ideals that are not associated, we are able to drop some of the hypotheses on the ring $B$. 


The following theorem is the main result of this paper.

\begin{manualtheorem}{\ref{bigassthm}}
Let $(B,M)$ be a local ring containing the rationals and assume that $B/M$ is uncountable. Suppose that $\Ass(B) = \{Q_1, Q_2, \dots, Q_n\}$, with $n \geq 2$ and that $M \not\in \Ass(B)$. Suppose also $\height(Q_i) \leq 1$ for all $i \in \{1, 2, \dots, n\}$. Consider $Q_1$ and $Q_2$ such that $\height(Q_1)=\height(Q_2)=0$ or $\height(Q_1)=1, \height(Q_2)=0$ and $Q_2$ is the unique minimal prime ideal contained in $Q_1$ \footnote{In Section \ref{sec3}, we define these conditions on $Q_1$ and $Q_2$ as property \ref{*}.}. Then there is a local ring $S \subseteq B$ with maximal ideal $S \cap M$ such that 
\medskip
\begin{enumerate}
    \item $S$ contains the rationals
    \item $\widehat{S}=\widehat{B}$
    \item $S/(S \cap M)$ is uncountable
    \item $S \cap Q_i = S \cap Q_j$ if and only if $i=j$ or $i, j \in \{1, 2\}$.
\end{enumerate}
\medskip
\end{manualtheorem}



The following are examples of rings where Theorem \ref{bigassthm} in this paper can be applied but Theorem 2.14 in \cite{mg} cannot.

\begin{example}
Let $B = \mathbb{R}[[X_1, X_2, \dots, X_n]]/(\langle X_1^2 \rangle \cap \langle X_2^2 \rangle \cap \dots \cap \langle X_n^2 \rangle)$ with $n \geq 2$. Note that $B$ satisfies the conditions of Theorem \ref{bigassthm} in this paper, but is not reduced, and so does not satisfy the conditions of Theorem 2.14 in \cite{mg}. In this example, all associated prime ideals are minimal prime ideals and so any pair of associated prime ideals can be chosen as $Q_1$ and $Q_2$.
\end{example}


\begin{example} \label{2 stix}
Let $B=\mathbb{C}[[X_1,X_2,X_3,X_4]]/(\langle X_1^2, X_1X_2\rangle \cap \langle X_3^2, X_3X_4 \rangle)$. Then $B$ satisfies the conditions of Theorem \ref{bigassthm} and has two minimal prime ideals and two embedded associated prime ideals. Using Theorem \ref{bigassthm}, one could choose $Q_1$ and $Q_2$ to be the two minimal prime ideals, and one could choose $Q_2$ to be minimal and $Q_1$ to be the height one associated prime ideals containing $Q_2$. We show in Section \ref{Applications} (see Proposition \ref{ht1-no-no}) that if $\height(Q_1)=\height(Q_2)=1$ there is no subring $S$ of $B$ having the same completion as $B$ satisfying the property that $S$ has exactly three associated prime ideals and $\Ass(S) = \{Q_1 \cap S = Q_2 \cap S, Q_3 \cap S, Q_4 \cap S\}.$
\end{example}

\begin{example}
Let 
\[B = \mathbb{R}[[X_1, X_2, X_3, X_4]]/(\langle X_1^2 \rangle \cap \langle X_2^2 \rangle \cap \langle X_1, X_2 \rangle^5 \cap \langle X_3^2 \rangle \cap \langle X_4^2 \rangle \cap \langle X_3, X_4 \rangle^5)\]

\noindent As in the previous two examples, $B$ satisfies conditions for Theorem \ref{bigassthm}. By Theorem \ref{bigassthm}, we know there exists a subring $S$ of $B$ such that $S \cap \langle X_1 \rangle = S \cap \langle X_3 \rangle$. $S$ also satisfies the necessary conditions to apply Theorem \ref{bigassthm} again, and so we know there exists an $S'$ where $S' \cap \langle X_2 \rangle = S' \cap \langle X_4 \rangle$. 
These two ``gluing'' moves generate a ring $S'$ that has four associated prime ideals, two that are minimal and two that have height one.  Moreover, the two minimal prime ideals are both contained in the height-one associated prime ideals.

\end{example}

\section{Generalized Gluing Theorem}\label{sec3}

We are now ready to begin the proof of our main result, The Generalized Gluing Theorem.  The proof is directly inspired by techniques from \cite{mg}. Throughout, $(B,M)$ will be a local ring with $B/M$ uncountable. We begin our construction with a definition, which is an amended version of a Minimal-Gluing subring, introduced in \cite{mg}. The proof of Lemma 2.8 in \cite{mg} uses the fact that, if $R$ is a subring of $B$, and $Q_1$ is a minimal prime ideal of $B$, then for all $a \in R \cap Q_1, \ann_B(a) \nsubseteq Q_1$. This fact follows from the assumption used in \cite{mg} that $B$ is reduced. When dropping that assumption, however, we cannot conclude that for all $a \in R \cap Q_1, \ann_B(a) \nsubseteq Q_1$. In order to prove the analogous version of Lemma 2.8 in \cite{mg}, we add this condition to the definition of a Minimal-Gluing subring. The result of the amendment is the following new definition. 

%
\begin{definition}\label{omg}
Let $(B, M)$ be a local ring with $B/M$ uncountable such that $\Ass(B) = \{Q_1, Q_2,\dots,Q_n\}$ with $n \geq 2$. A quasi-local subring $(R, R \cap M)$ of $B$ is called a One–or-Minimal-Gluing subring of $B$, or an OMG-subring of $B,$
 if $R$ is infinite, $|R| < |B/M|$, $R \cap Q_1 = R \cap Q_2$, and for all $a \in R \cap Q_1, \ann_B(a) \nsubseteq Q_1$ and $\ann_B(a) \nsubseteq Q_2$. 
\end{definition}

This definition describes a subring $R$ of $B$ such that two distinct associated prime ideals of $B$, call them $Q_1$ and $Q_2$, satisfy $Q_1 \cap R = Q_2 \cap R$. Intuitively, we think of $Q_1$ and $Q_2$ as having been ``glued together'' in the subring $R$ of $B$. 

In Definition \ref{2xomg}, we introduce a special type of OMG-subring, called an organized OMG-subring, in which exactly two associated prime ideals of $B$ have been glued together and all others remain distinct from each other. The process for constructing an organized OMG-subring is closely drawn from techniques in \cite{mg}. 

We start, in our case, with an OMG-subring. We assume that $B$ contains the rationals in order to guarantee there exists an OMG-subring of $B$. With this OMG-subring, we adjoin elements to construct an organized OMG-subring of $B$.  Then we construct increasingly larger subrings of $B$, carefully selecting the elements we adjoin in order to ensure the subring at each step is an OMG-subring. When an OMG-subring contains an organized OMG-subring, it is itself an organized OMG-subring.  Since our OMG-subrings contain an organzied OMG-subring, they will also be organized OMG-subrings. The following two lemmas will be useful in choosing which elements can be safely adjoined to maintain an OMG-subring as we adjoin elements. The first of the two lemmas can be thought of as a generalization of the prime avoidance theorem.

%
\begin{lemma}[\cite{heitmann}, Lemma 3]\label{primeavoidbig}

Let $(B, M)$ be a local ring. Let $C \subseteq \Spec(B),$ let $I$ be an ideal of $B$ such that $I \nsubseteq P$ for every $P \in C$, and let $D$ be a subset of $B$. Suppose $|C \times D| < |B/M|$. Then $I \nsubseteq \bigcup\{P +r | P \in C, r \in D\}$.

\end{lemma}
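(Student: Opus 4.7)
I would prove this by contradiction: assume $I \subseteq \bigcup_{(P,r) \in C \times D} (P + r)$, and aim to derive a contradiction from the cardinality hypothesis $|C \times D| < |B/M|$.

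Two elementary observations set up the counting. First, for each $P \in C$, since $I \not\subseteq P$, the image $(I + P)/P$ is a nonzero ideal in the integral domain $B/P$; a nonzero element of it generates a copy of $B/P$ (via multiplication, using that $B/P$ is a domain), and $B/P$ surjects onto $B/M$. Hence $|I/(I \cap P)| = |(I+P)/P| \geq |B/M|$. Second, each intersection $I \cap (P+r)$, when nonempty, is a single coset of $I \cap P$ in $I$, so the covering assumption expresses $I$ as a union of at most $|C \times D|$ such cosets across the primes in $C$.

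Because $|C \times D| < |B/M|$, I can fix a set $U \subseteq B$ with $|U| > |C \times D|$ whose elements are pairwise distinct modulo $M$; then for any $u_1 \neq u_2$ in $U$, the difference $u_1 - u_2$ lies outside $M$, is therefore a unit of $B$, and in particular lies outside every $P \in C$ (as $P \subseteq M$). The core of the argument is to choose $a, b \in I$ and examine the family $\{a + ub : u \in U\} \subseteq I$. For each $(P,r) \in C \times D$: (i) if $b \notin P$, then at most one $u \in U$ yields $a + ub \in P + r$, since two such $u$'s would force $(u_1 - u_2)b \in P$ with $u_1 - u_2$ a unit; (ii) if $b \in P$, then $a + ub \equiv a \pmod P$ for every $u$, so all or none of the $u$ are bad for $(P,r)$, according as $a$ lies in $P + r$ or not.

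To close, it suffices to pick $a, b \in I$ so that the \emph{uniformly bad} case (ii) never arises: for every $(P, r)$ with $b \in P$, require $a \notin P + r$. Granted this, the bad $u$'s number at most $|C \times D| < |U|$, so some $u \in U$ produces $a + ub \in I$ outside every $P + r$, contradicting the covering. The main obstacle is making this joint choice of $a$ and $b$. My plan is to iterate the prime-avoidance idea: first pick $b \in I \setminus P_0$ for some $P_0 \in C$ (possible since $I \not\subseteq P_0$), cutting the problematic primes to $C' = \{P \in C : b \in P\} \subsetneq C$; then find $a \in I$ avoiding $\bigcup_{(P,r) \in C' \times D}(P+r)$, which is a strictly ``smaller'' instance of the original statement. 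I would formalize this as a transfinite induction on a well-ordering of $C$, exploiting at every stage the preserved bound $|C' \times D| \leq |C \times D| < |B/M|$ to re-invoke the unit-difference trick.
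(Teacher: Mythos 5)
The paper does not prove this lemma; it imports it verbatim as Lemma 3 of Heitmann's 1993 paper, so there is no in-text proof to compare against. Judged on its own, your sketch has the right machinery in place but a real gap at the point where you wave at ``transfinite induction.''

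What is solid: the observation that $|I/(I\cap P)| = |(I+P)/P| \geq |B/P| \geq |B/M|$ for $P \in C$; the observation that $I\cap(P+r)$ is a coset of $I\cap P$; the unit-difference trick (if $u_1\neq u_2$ in a system $U$ of residue representatives then $u_1-u_2$ is a unit, so if $b\notin P$ a fixed pair $(P,r)$ can catch at most one element of $\{a+ub : u\in U\}$); and the resulting reduction to the subfamily $C' = \{P\in C : b\in P\}$, where $P_0\notin C'$ once you choose $b\in I\setminus P_0$. These are exactly the ingredients one expects.

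The gap is in closing the recursion. You observe $C'\subsetneq C$ and propose ``transfinite induction on a well-ordering of $C$, exploiting the preserved bound $|C'\times D|<|B/M|$.'' But strict inclusion of subsets of $C$ is \emph{not} a well-founded relation when $C$ is infinite, so ``$C'\subsetneq C$'' by itself gives no basis for recursion: you can have an infinite strictly decreasing chain $C\supsetneq C_1\supsetneq C_2\supsetneq\cdots$ with every $C_n$ nonempty. Nor can you define the construction at limit ordinals, since the elements $b_0,b_1,\dots$ you choose along the way do not combine into a single element of $I$ without some additional structure. As written, the recursion has no base case you can guarantee reaching.

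The missing ingredient is the Noetherian hypothesis (built into ``local ring'' in this paper), and it forces the recursion to be \emph{finite}, not transfinite. If the process never reached $C_n=\emptyset$, you would produce $b_1,b_2,\dots\in I$ with $C_n=\{P\in C : b_1,\dots,b_n\in P\}$ strictly decreasing. But if $b_{n+1}\in(b_1,\dots,b_n)$, then every $P\in C_n$ (which contains $b_1,\dots,b_n$) already contains $b_{n+1}$, giving $C_{n+1}=C_n$, a contradiction; so $(b_1)\subsetneq(b_1,b_2)\subsetneq\cdots$ is a strictly ascending chain of ideals of $B$, which violates ACC. Hence some $C_N=\emptyset$, at which point the original cover $I\subseteq\bigcup_{(P,r)\in C_N\times D}(P+r)=\emptyset$ is absurd. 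Replacing your transfinite-induction remark with this ACC argument closes the proof; without it, the argument is incomplete for infinite $C$ (which the lemma's hypotheses permit, even though the paper's applications only use $|C|=2$).

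One small additional remark: the ``unit-difference'' step is most cleanly packaged as the statement that, for any $a\in I$ and any $b\in I\setminus P_0$, the cover $I\subseteq\bigcup_{(P,r)\in C\times D}(P+r)$ forces $a\in P+r$ for some $(P,r)$ with $b\in P$ (apply the counting to $\{a+ub : u\in U\}$ and observe $u^*b\in P^*$). Quantifying over $a\in I$ then shows the cover can be taken over $C'\times D$ directly, which avoids having to simultaneously pick a good $a$ and a good $u$ in the recursion and makes the ACC argument above immediate.
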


The next lemma provides a sufficient condition on $x \in B$ such that, given an OMG-subring $R$ of $B$, $R[x]_{(R[x] \cap M)}$ is an OMG-subring of $B$.

%
\begin{lemma}\label{transcend}

Let $(B, M)$ be a local ring with $B/M$ uncountable. Suppose that $\Ass(B)= \{Q_1, \dots, Q_n\}$ with $n \geq 2$. Suppose $(R, R\cap M)$ is an OMG-subring of $B$. If $x \in B$ satisfies the condition that $x + Q_i \in B/Q_i$ is transcendental over $R/(Q_i \cap R)$ for $i \in \{1,2\},$ then $S= R[x]_{(R[x]\cap M)}$ is an OMG-subring of $B$ with $|S|=|R|.$
\end{lemma}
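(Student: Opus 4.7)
The plan is to verify the four requirements of Definition \ref{omg} for $S$: quasi-local structure, the cardinality bound $|S|=|R|<|B/M|$, the gluing equality $S\cap Q_1=S\cap Q_2$, and the annihilator non-containment for every element of $S\cap Q_1$. Quasi-locality is automatic from the choice of denominator set. Since $R$ is infinite, $|R[x]|=|R|$, and the localization then satisfies $|S|\le|R[x]|^2=|R|\le|S|$, giving $|S|=|R|$, which is infinite and still strictly smaller than $|B/M|$.

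For the gluing, suppose $s\in S\cap Q_1$ and write $s=f(x)/g(x)$ with $f,g\in R[x]$ and $g(x)\notin R[x]\cap M$, so $g(x)$ is a unit in $B$ and $f(x)\in Q_1$. Writing $f(x)=\sum_{i=0}^{d} r_i x^i$ with $r_i\in R$ and reducing modulo $Q_1$ gives
\[
\sum_{i=0}^{d} (r_i+Q_1)(x+Q_1)^i = 0 \text{ in } B/Q_1.
\]
The coefficients lie in the image $(R+Q_1)/Q_1\cong R/(R\cap Q_1)$, and $x+Q_1$ is transcendental over this subring by hypothesis, so $r_i\in R\cap Q_1$ for every $i$. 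Since $R$ is an OMG-subring, $R\cap Q_1=R\cap Q_2$, so $r_i\in Q_2$ for all $i$; hence $f(x)\in Q_2$ and $s\in Q_2$. The reverse inclusion is symmetric using the transcendence of $x+Q_2$ over $R/(R\cap Q_2)$.

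For the annihilator condition, fix $a\in S\cap Q_1$ and write $a=f(x)/g(x)$ as above, with $f(x)=\sum_{i=0}^{d} r_i x^i$ and $r_i\in R\cap Q_1$ from the previous step. Because $R$ is an OMG-subring, $\ann_B(r_i)\not\subseteq Q_1$ for each $i$. Now invoke the elementary fact that a finite product of ideals, none of which is contained in a given prime $P$, is itself not contained in $P$; combined with the containment $\ann_B(r_0)\cdots\ann_B(r_d)\subseteq\bigcap_{i=0}^{d}\ann_B(r_i)$, this yields $\bigcap_{i=0}^{d}\ann_B(r_i)\not\subseteq Q_1$. Choose $b$ in this intersection but outside $Q_1$. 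Then $br_i=0$ for every $i$, so $bf(x)=0$, and since $g(x)$ is a unit in $B$ we also have $ba=0$. Thus $\ann_B(a)\not\subseteq Q_1$, and the same argument with $Q_2$ in place of $Q_1$ completes the verification.

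The substantive step is the annihilator clause: in the reduced setting of \cite{mg} one can essentially use $a$ itself to witness the non-containment, but without reducedness this is unavailable, which is precisely what forced the amendment in Definition \ref{omg}. The key observation that converts the per-coefficient annihilator data inherited from $R$ into a single annihilator of $f(x)$ lying outside $Q_1$ (respectively $Q_2$) is the product-versus-intersection trick applied to the prime ideals $Q_1$ and $Q_2$; everything else is bookkeeping around the transcendence hypothesis and the fact that denominators become units in $B$.
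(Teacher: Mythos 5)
Your proof is correct and follows essentially the same route as the paper: the transcendence of $x+Q_i$ forces the coefficients of any polynomial in $R[x]\cap Q_1$ into $R\cap Q_1 = R\cap Q_2$, and then the per-coefficient annihilator data is converted into an annihilator of the polynomial lying outside $Q_1$ (and outside $Q_2$). The paper realizes the last step by picking $a_i\in\ann_B(r_i)\setminus Q_1$ and taking the product $a_0\cdots a_m$, while you phrase it via the product-of-ideals-versus-prime lemma together with the containment of the product in the intersection — the same argument, just expressed at the level of ideals rather than chosen elements; you are also a bit more explicit than the paper in reducing from $S\cap Q_1$ to $R[x]\cap Q_1$ via the unit denominator, which the paper leaves as an ``it follows.''
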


\begin{proof}
Much of our proof follows the proof of Lemma 2.3 in \cite{mg} exactly.

Since $R$ is infinite, $|S|=|R|$ and so $|S| < |B/M|$. Now suppose $f \in R[x] \cap Q_1$. Then $f=r_mx^m + r_{m-1}x^{m-1}+ \dots + r_1x+r_0 \in Q_1$ where $r_j \in R$ for $0 \leq j \leq m$. Since $x + Q_1$ is transcendental over $R/(R \cap Q_1)$, we have $r_j \in R \cap Q_1=R \cap Q_2$. Hence, $f \in Q_2$, and so $R[x] \cap Q_1 \subseteq R[x] \cap Q_2$. Similarly, $R[x] \cap Q_2 \subseteq R[x] \cap Q_1$, and therefore $R[x] \cap Q_1 = R[x] \cap Q_2$. It follows that $S \cap Q_1 = S \cap Q_2.$

We now show that, for all $f \in R[x] \cap Q_1$, $\ann_B(f) \nsubseteq Q_1$ and $\ann_B(f) \nsubseteq Q_2$. Suppose $f = r_0 + r_1x + r_2x^2 + \dots + r_mx^m \in Q_1.$ Then, as before, $r_i \in Q_1$ for all $i \in \{0, 1, \dots, m\}$. Because $R$ is an OMG-subring, we have $\ann_B(r_i) \nsubseteq Q_1$ for all $i = 0,1,2, \ldots ,m$. Hence, for all $i$ there exists $a_i \in \ann_B(r_i)$ such that $a_i \notin Q_1$. Let $\alpha = a_0a_1\cdots a_m$. Then $\alpha \in \ann_B(f)$ and $\alpha \notin Q_1$. It follows that, for all $f \in S \cap Q_1$, $\ann_B(f) \not\subseteq Q_1$. A similar argument shows that, for all $f \in S \cap Q_1 = S \cap Q_2$, $\ann_B(f) \not\subseteq Q_2$. Hence, $S$ is an OMG-subring of $B$.
%
%
%
\end{proof}

We now use Lemma \ref{primeavoidbig} and Lemma \ref{transcend} to identify elements of $B$ that can be adjoined to a given OMG-subring of $B$ that will result in another OMG-subring of $B$.
%

\begin{lemma}\label{adjoining}
Let $(B, M)$ be a local ring with $B/M$ uncountable. Suppose $\Ass(B)= \{Q_1, \dots, Q_n\}$ with $n \geq 2$. Suppose also that $(R, R\cap M)$ is an OMG-subring of $B$. Let $b \in B$ and let $z \in B$ such that $z \notin Q_1$ and $z \notin Q_2$. Let $J$ be an ideal of $B$ such that $J \nsubseteq Q_1$ and $J \nsubseteq Q_2$. Then there is an element $w \in J$ such that $S = R[b + zw]_{(R[b + zw]\cap M)}$ is an OMG-subring of $B$ with $|S|=|R|$.
\end{lemma}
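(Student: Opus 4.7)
The plan is to use the generalized prime avoidance of Lemma \ref{primeavoidbig} to locate an element $w \in J$ for which $b + zw$ satisfies the transcendence criterion of Lemma \ref{transcend}. Once such a $w$ is in hand, Lemma \ref{transcend} applied to $x = b + zw$ immediately delivers the conclusion that $S = R[b+zw]_{(R[b+zw]\cap M)}$ is an OMG-subring of $B$ with $|S|=|R|$. Thus the entire work lies in capturing the set of ``bad'' $w$ (those for which $b+zw+Q_i$ fails to be transcendental over $R/(R \cap Q_i)$ for some $i \in \{1,2\}$) in the union-of-cosets form to which Lemma \ref{primeavoidbig} applies.

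First I would count, then convert to a coset condition. Since $R$ is infinite, the ring $R/(R \cap Q_i)$ has cardinality at most $|R|$, so the polynomial ring in one variable over it also has cardinality at most $|R|$; as $B/Q_i$ is an integral domain, each nonzero polynomial has only finitely many roots there, and hence the algebraic closure of $R/(R \cap Q_i)$ inside $B/Q_i$ has cardinality at most $|R|$. For each such algebraic class $\bar e \in B/Q_i$, the requirement $(b + zw) + Q_i = \bar e$ rewrites as $\bar z \bar w = \overline{e - b}$ in $B/Q_i$. Because $z \notin Q_i$, the class $\bar z$ is a nonzero, hence nonzerodivisor, element of the domain $B/Q_i$, so this equation admits at most one solution $\bar w$. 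Thus the offending $w$ for each pair $(i,\bar e)$ lie in at most one coset of $Q_i$, and the full bad set is contained in $\bigcup\{Q_i + r : i \in \{1,2\},\ r \in D\}$ for some $D \subseteq B$ with $|D| \leq 2|R|$.

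Finally I would invoke Lemma \ref{primeavoidbig} with $I = J$, $C = \{Q_1, Q_2\}$, and $D$ as above. The hypotheses $J \nsubseteq Q_1$ and $J \nsubseteq Q_2$ are given, and $|C \times D| \leq 4|R| = |R| < |B/M|$ holds by the OMG-subring definition, so the lemma produces $w \in J$ outside every bad coset. For this $w$, $b + zw + Q_i$ is transcendental over $R/(R \cap Q_i)$ for $i=1,2$, and Lemma \ref{transcend} completes the proof. The step I expect to be the main obstacle is the second one, namely packaging the transcendence obstruction as a union of cosets of $Q_1$ and $Q_2$; the key input is that multiplication by $\bar z$ is injective on the domain $B/Q_i$, so the equation $\bar z \bar w = \overline{e - b}$ has at most one solution. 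Without that observation, the natural description of the bad set would not fit the template of Lemma \ref{primeavoidbig}, and the strategy would collapse.
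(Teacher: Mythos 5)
Your proposal is correct and follows essentially the same route as the paper's proof: set up $C=\{Q_1,Q_2\}$ and a suitable $D$ capturing the ``bad'' cosets, invoke Lemma \ref{primeavoidbig} with $I=J$ to obtain $w\in J$ avoiding them, and then apply Lemma \ref{transcend}. The key observation that $z\notin Q_i$ makes $t\mapsto b+tz$ injective modulo $Q_i$, so the non-transcendental $w$ fall into at most $|R|$ cosets of $Q_i$, is exactly the argument used in the paper.
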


\begin{proof}
Our proof is heavily based on the proof of Lemma 2.4 in \cite{mg}. Let $i \in \{1, 2\}$, and suppose $b + tz + Q_i=b +t'z +Q_i$ with $t, t' \in B$. Then $z(t-t') \in Q_i$ and since $z \notin Q_i$, we have $t + Q_i = t' + Q_i$. Therefore, $b +tz + Q_i= b +t'z + Q_i$ if and only if $t + Q_i = t' + Q_i$. Let $D_i$ be a full set of coset representatives for the cosets $t + Q_i \in B/Q_i$ that make $b + zt + Q_i$ algebraic over $R/(R \cap Q_i)$. Note that $|D_i| \leq |R|$. Define $D = D_1 \cup D_2$ and $C = \{ Q_1, Q_2\}$. Then $|C \times D| \leq |R| < |B/M|$. By Lemma \ref{primeavoidbig} using $I=J$, there is an element $w \in J$ such that $w \notin \bigcup\{P+r | P \in C, r \in D\}$. Then $b +zw + Q_i$ is transcendental over $R/(R \cap Q_i)$ for $i \in \{1,2\}$. By Lemma \ref{transcend}, $S = R[b +zw]_{(R[b+zw]\cap M)}$ is an OMG-subring of $B$ and $|S|=|R|$.
\end{proof}

Recall that our final ring is to have the same completion as $B$. In order to achieve this, we use the following proposition.



\begin{proposition}[\cite{mg}, Proposition 2.6]\label{completionsame}
Let $(B,M)$ be a local ring and let $T = \widehat{B}$. Suppose $(S,S \cap M)$ is a quasi-local subring of $B$ such that the map $S \longrightarrow B/M^2$ is onto and $IB \cap S = I$ for every finitely generated ideal $I$ of $S$.  Then $S$ is Noetherian and $\widehat{S} = T$.  Moreover, if $B/M$ is uncountable, then $S/(S \cap M)$ is uncountable.
\end{proposition}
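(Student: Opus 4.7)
The plan is to exploit the two hypotheses in tandem: the surjection $S\to B/M^2$ will let me lift a generating set of $M$ into $S$, and the contraction condition $IB\cap S=I$ will control how ideals of $S$ behave under extension to $B$ and contraction back. I will use these to show, in order, that $\mathfrak{m}:=S\cap M$ is finitely generated, that $S/\mathfrak{m}^n\cong B/M^n$ for every $n$, that $\widehat{S}=T$, and finally that $S$ is Noetherian.

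First, I would choose generators $x_1,\dots,x_k$ of $M$ (possible since $B$ is Noetherian) and, using the hypothesis that $S\to B/M^2$ is onto, lift them to $s_1,\dots,s_k\in S$ with $s_i\equiv x_i\pmod{M^2}$. Set $I_0=(s_1,\dots,s_k)S$. Then $I_0B+M^2=M$, so Nakayama's lemma, applied to the finitely generated $B$-module $M/I_0B$, yields $I_0B=M$. The hypothesis $IB\cap S=I$ applied to $I=I_0$ then gives $I_0=I_0B\cap S=M\cap S=\mathfrak{m}$, so $\mathfrak{m}$ is finitely generated.

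Next, a short induction on $n$, using $\mathfrak{m}B=M$ together with the base case $B=S+M$, shows $B=S+M^n$ for every $n\geq 1$. Applying the contraction hypothesis to the finitely generated ideal $\mathfrak{m}^n$ and using $\mathfrak{m}^nB=M^n$ gives $M^n\cap S=\mathfrak{m}^n$. Therefore the natural map $S/\mathfrak{m}^n\to B/M^n$ is both injective (from $M^n\cap S=\mathfrak{m}^n$) and surjective (from $S+M^n=B$), hence an isomorphism. Passing to inverse limits yields $\widehat{S}\cong\widehat{B}=T$, and the claim about residue fields is the $n=1$ case of this isomorphism.

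Finally, to show $S$ is Noetherian, let $J$ be any ideal of $S$. Since $B$ is Noetherian, $JB$ is finitely generated, and by rewriting a generating set as finite $B$-linear combinations of elements of $J$ I can find $j_1,\dots,j_\ell\in J$ with $JB=(j_1,\dots,j_\ell)B$. Setting $I=(j_1,\dots,j_\ell)S\subseteq J$, we have $IB=JB$, so by the contraction hypothesis, $I=IB\cap S=JB\cap S\supseteq J$, forcing $J=I$. The main obstacle I anticipate is the opening step: before the contraction condition can be invoked fruitfully, one must verify that the lifted ideal $I_0$ actually generates $M$ over $B$, which is where the surjection onto $B/M^2$ (and not merely onto $B/M$) is essential, via Nakayama. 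Once $\mathfrak{m}$ is known to be finitely generated, the remaining steps are clean consequences of the two hypotheses combined with the Noetherianness of $B$.
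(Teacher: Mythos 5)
The paper does not give its own proof of this proposition; it is quoted verbatim from \cite{mg} (Proposition 2.6 there), which in turn traces back to the standard ``completion-matching'' lemma that appears in various forms in Heitmann's work. Your argument is correct and is in fact the standard proof of this result. The sequence of steps --- lift generators of $M$ into $S$ via surjectivity onto $B/M^2$, invoke Nakayama to get $\mathfrak{m}B=M$, apply the contraction hypothesis to identify $\mathfrak{m}$ as finitely generated, propagate $M^n\cap S=\mathfrak{m}^n$ and $B=S+M^n$ to get $S/\mathfrak{m}^n\cong B/M^n$, and pass to the inverse limit --- is exactly how this is done in the literature. Your Noetherianity argument (pulling a finite $B$-generating set of $JB$ back to finitely many elements of $J$, then using contraction) is likewise the usual one. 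Two very small points worth making explicit if you write this up: when verifying that the base-case elements $s_i$ lie in $\mathfrak{m}$, note $s_i=x_i+m_i\in M$ with $s_i\in S$, so $s_i\in S\cap M$; and the identity $\mathfrak{m}^nB=(\mathfrak{m}B)^n=M^n$ used for the injectivity of $S/\mathfrak{m}^n\to B/M^n$ deserves a one-line justification. Neither is a gap --- your proof is complete.
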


In order to apply Proposition \ref{completionsame} and show $\widehat{S}=\widehat{B}$, we need to ensure that the subring $S$ of $B$ contains an element from every coset in $B/M^2$. To do this, we first present the following lemma, which details how to safely adjoin an element of a given coset $b + M^2$ to an OMG-subring and obtain another OMG-subring. We will ultimately adjoin an element from every coset of $B/M^2$.


\begin{lemma}\label{cosetrep}
Let $(B, M)$ be a local ring with $B/M$ uncountable. Suppose that $\Ass(B)= \{Q_1, \dots, Q_n\}$ with $n \geq 2$. Suppose also that $M \neq Q_1$ and $M \neq Q_2$. Let $b \in B$ and suppose $(R, R \cap M)$ is an OMG-subring of $B$. Then there exists an OMG-subring $(S, S \cap M)$ of $B$ such that $R \subseteq S, |S| = |R|$, and $S$ contains an element of the coset $b + M^2$. 
\end{lemma}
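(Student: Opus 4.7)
The plan is to apply Lemma \ref{adjoining} with a carefully chosen $z$ and ideal $J$ so that the element $b+zw$ produced by that lemma automatically lies in the coset $b+M^2$. Specifically, I would take the ideal in Lemma \ref{adjoining} to be $J=M$ and choose $z$ to lie in $M$ itself; then $zw \in M \cdot M = M^2$, so $b+zw \in b+M^2$, as desired.

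To carry this out, I first need to verify the hypotheses of Lemma \ref{adjoining} for this choice. Since $Q_1$ and $Q_2$ are associated prime ideals of the local ring $(B,M)$, they are contained in $M$, and by assumption $M \neq Q_1$ and $M \neq Q_2$, so $Q_1, Q_2 \subsetneq M$. In particular, $M \nsubseteq Q_1$ and $M \nsubseteq Q_2$, so the ideal $J = M$ satisfies the hypothesis on $J$ in Lemma \ref{adjoining}. Next, by the classical prime avoidance theorem (or by Lemma \ref{primeavoidbig} applied with $C = \{Q_1, Q_2\}$, $D = \{0\}$, and $I = M$), there exists an element $z \in M$ with $z \notin Q_1$ and $z \notin Q_2$.

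With this $z$ and this $J = M$, Lemma \ref{adjoining} produces an element $w \in M$ such that $S = R[b+zw]_{(R[b+zw] \cap M)}$ is an OMG-subring of $B$ with $|S| = |R|$. Since $R \subseteq R[b+zw] \subseteq S$, we have $R \subseteq S$. Finally, because both $z$ and $w$ lie in $M$, the product $zw$ lies in $M^2$, so $b+zw \in b+M^2$, and this element is in $S$.

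I do not expect any real obstacle; the only subtlety is recognizing that one should push $z$ into $M$ (rather than taking an arbitrary unit $z$) so that the inclusion $zw \in M^2$ comes for free from the choice $J=M$, and then observing that the hypothesis $M \neq Q_1, Q_2$ is exactly what guarantees both that such a $z$ exists in $M$ and that $J=M$ meets the ideal-hypothesis of Lemma \ref{adjoining}.
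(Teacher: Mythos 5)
Your proof is correct and uses the same key mechanism as the paper: invoke Lemma \ref{adjoining} to adjoin an element of the coset $b+M^2$. The paper does it slightly more directly by taking $z=1$ and $J=M^2$ (so the output $w\in M^2$ automatically gives $b+w\in b+M^2$ without any prime-avoidance step to select $z$), whereas you take $J=M$ and a nonunit $z\in M\setminus(Q_1\cup Q_2)$ so that $zw\in M^2$; both choices satisfy the hypotheses of Lemma \ref{adjoining} and yield the same conclusion.
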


\begin{proof}
Our proof is an amended version of the proof of Lemma 2.7 in \cite{mg}. By assumption, $M \neq Q_1$ and $M \neq Q_2$. Hence, $M^2 \not\subseteq Q_1$ and $M^2 \not\subseteq Q_2$.  Use Lemma \ref{adjoining} with $J = M^2$ and $z = 1$ to find $m \in M^2$ such that $S = R[b + m]_{(R[b + m] \cap M)}$ is an OMG-subring of $B$ with $|S| = |R|$.  Note that $R \subseteq S$ and $S$ contains $b + m$, an element of the coset $b + M^2$.
\end{proof}

When using Proposition \ref{completionsame}, we need to ensure that in our final ring $S$, $IB \cap S=I$ for every finitely generated ideal $I$ of $S$. Lemma \ref{closer} will help us do this, the proof of which is largely taken from the proof of Lemma 2.8 in \cite{mg}. Our proof, however, includes the necessary adjustments made to prove the additional annihilator condition of an OMG-subring holds.


\begin{lemma}\label{closer}
Let $(B, M)$ be a local ring with $B/M$ uncountable. Suppose that $\Ass(B)= \{Q_1, \dots, Q_n\}$, with $n \geq 2$. Let $(R, R \cap M)$ be an OMG-subring of $B$. Then, for any finitely generated ideal $I$ of $R$ and for any $c \in IB \cap R$, there is an OMG-subring $(S, S \cap M)$ of $B$ such that $R \subseteq S, |S| = |R|$, and $c \in IS$.
\end{lemma}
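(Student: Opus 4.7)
The plan is to follow the strategy of Lemma 2.8 in \cite{mg} and adapt it to the new OMG definition. Let $I = (y_1, \ldots, y_k) R$ and fix a representation $c = y_1 b_1 + \cdots + y_k b_k$ with $b_i \in B$. The goal is to produce an OMG-subring $S \supseteq R$ with $|S| = |R|$ and elements $u_1, \ldots, u_k \in S$ such that $c = \sum y_i u_i$, which gives $c \in IS$.

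To build $S$, I would construct a finite chain $R = R_0 \subseteq R_1 \subseteq \cdots \subseteq R_m = S$ of OMG-subrings, each obtained from the previous one by a single application of Lemma \ref{adjoining} and each of cardinality $|R|$. At each stage I would adjoin an element of the form $u_i = b_i + d_i$, where the tuple $(d_1, \ldots, d_k) \in B^k$ is arranged to be a syzygy of $(y_1, \ldots, y_k)$, so that $\sum y_i d_i = 0$ and the total sum $c$ is preserved. For an index $i$ with $y_i \in R \cap Q_1 = R \cap Q_2$, the newly-added OMG annihilator condition on $R$ guarantees $\ann_B(y_i) \nsubseteq Q_1$ and $\ann_B(y_i) \nsubseteq Q_2$, and I would apply Lemma \ref{adjoining} with $b = b_i$, $z = 1$, and $J = \ann_B(y_i)$ to produce $w \in \ann_B(y_i)$ so that taking $d_i = w$ yields both an OMG-extension and $y_i d_i = 0$. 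For the remaining indices (where $y_i \notin Q_1 \cup Q_2$), I would pair them and employ Koszul-type syzygies $d_i = y_j e$, $d_j = -y_i e$ for a common $e \in B$; the two corresponding adjunctions use Lemma \ref{adjoining} with $z = y_j$ and $z = y_i$ respectively, and a joint prime-avoidance calculation (combining the two small bad sets, each of cardinality less than $|B/M|$, in the spirit of Lemma \ref{primeavoidbig}) lets us select a single $e$ that works for both adjunctions.

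The main obstacle, and the principal divergence from \cite{mg}, is verifying that the annihilator clause of Definition \ref{omg} survives each adjunction. This is essentially built into Lemma \ref{transcend}, and hence into Lemma \ref{adjoining}: for any polynomial $f$ in the newly adjoined element that lies in $Q_1 \cap R_i$, the coefficients of $f$ lie in $Q_1 \cap R_{i-1}$, and multiplying together a chosen non-$Q_1$ annihilator from each coefficient produces an element of $\ann_B(f)$ outside $Q_1$, with the analogous product giving an annihilator outside $Q_2$. Consequently the OMG property propagates down the chain of extensions, and after finitely many adjunctions we reach an OMG-subring $S$ of the required cardinality containing $u_1, \ldots, u_k$ with $c = \sum y_i u_i \in IS$, as required.
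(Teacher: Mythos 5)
Your plan diverges from the paper's inductive argument, and the divergence introduces a real gap. You propose to perturb all $b_i$ to $u_i = b_i + d_i$ simultaneously, with $(d_1, \dots, d_k)$ a syzygy of $(y_1, \dots, y_k)$, where each $d_i \in \ann_B(y_i)$ when $y_i \in Q_1 \cap R$, and where the remaining indices (those with $y_i \notin Q_1 \cup Q_2$) are paired off with Koszul syzygies $d_i = y_j e$, $d_j = -y_i e$. The pairing step has no answer when the number of indices with $y_i \notin Q_1 \cup Q_2$ is odd. The simplest instance is already $k = 1$ with $a = y_1 \notin Q_1$: then $\sum y_i d_i = 0$ forces $d_1 \in \ann_B(a)$, but the OMG annihilator hypothesis gives you nothing for $a \notin Q_1$, and $\ann_B(a)$ may be $(0)$ or contained in $Q_1$. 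You cannot feed $J = \ann_B(a)$ into Lemma~\ref{adjoining}, and you also cannot adjoin $b_1$ unperturbed via Lemma~\ref{adjoining}, because you have no control on whether $b_1 + Q_i$ is transcendental over $R/(R \cap Q_i)$. More generally, a Koszul pairing with a partner $y_j \in Q_1$ would require $z = y_j \in Q_1$ in Lemma~\ref{adjoining}, violating its hypothesis $z \notin Q_1$.

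The paper closes exactly this case with a different idea: it inducts on $k$, and in the base case $k = 1$ with $a \notin Q_1$ it takes $S = R[u]_{(R[u] \cap M)}$ for $c = au$ \emph{without} appealing to Lemma~\ref{adjoining} at all. Instead, for $f = \sum r_j u^j \in R[u] \cap Q_1$, it multiplies by $a^m$ to get $a^m f = \sum r_j c^j a^{m-j} \in R \cap Q_1$, then uses $a \notin Q_2$ and the primality of $Q_2$ to conclude $f \in Q_2$, and uses the OMG annihilator condition on $a^m f \in R \cap Q_1$ to get $\ann_B(f) \nsubseteq Q_i$. That algebraic argument, which trades perturbation of $b_i$ for the relation $au = c$ inside $R$, is what your scheme is missing. (A secondary issue: in your Koszul pairing, the ``bad set'' for the second adjunction lives over $R[b_i + y_j e]$, which itself depends on $e$, so the two prime-avoidance constraints are not simply two independent small sets; this is glossed over. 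The paper sidesteps it by folding $b_j - y_i w$ into the inductive call on $(y_2, \dots, y_k)$ rather than adjoining it directly.)
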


\begin{proof}
Let $I = (y_1, y_2, \dots, y_k).$ We induct on $k$.

Let $k=1$. Then $I=aR$ for $a \in R$.  Now let $c \in IB \cap R.$ Then $c=au$ for some $u \in B$. If $a=0$ then $S=R$ works. So assume $a \neq 0$.

First, suppose $a \notin Q_1$. Therefore $a \notin Q_2$. We claim $S=R[u]_{(R[u]\cap M)}$ is the desired OMG-subring of $B$. Suppose $f \in R[u] \cap Q_1$. Then $f = r_mu^m+\dots+r_1u+r_0$ where $r_i \in R$. Hence $a^mf = r_mc^m+r_{m-1}c^{m-1}a+\dots+r_1a^{m-1}+r_0a^m \in R \cap Q_1=R \cap Q_2$ because $a, c \in R$ and $R$ is an OMG-subring. Since $a \notin Q_2$, we have $f \in Q_2$. Hence $R[u]\cap Q_1\subseteq R[u]\cap Q_2$. Similarly, $R[u]\cap Q_2\subseteq R[u]\cap Q_1$ and so $R[u]\cap Q_2 = R[u]\cap Q_1$.  It follows that $S \cap Q_1 = S \cap Q_2.$

We now show that if $f \in R[u] \cap Q_1=R[u] \cap Q_2,$ then $\ann_B(f)\nsubseteq Q_1$ and $\ann_B(f)\nsubseteq Q_2$. Just as before, $a^mf = r_mc^m+r_{m-1}c^{m-1}a+\dots+r_1a^{m-1}+r_0a^m \in R \cap Q_1=R \cap Q_2$. Because $R$ is an OMG-subring, $\ann_B(a^mf) \nsubseteq Q_1$, and so there is a $b \notin Q_1$ such that $ba^mf=0$. Notice that $a \notin Q_1$ so $a^m \notin Q_1$, and we have $ba^m \notin Q_1$. Hence $(ba^m) \in \ann_B(f)$ but $ba^m \notin Q_1$, so we have $\ann_B(f) \nsubseteq Q_1$. Similarly, $\ann_B(f) \nsubseteq Q_2$. It follows that if $f \in S \cap Q_1$ then $\ann_B(f) \not\subseteq Q_1$ and $\ann_B(f) \not\subseteq Q_2$.

Note also that $R \subseteq S, |S|=|R|$, and $c \in IS$ and so $S$ is our desired OMG-subring.

Now assume that $a \in Q_1$. Then $a \in Q_2$. Then $\ann_B(a) \nsubseteq Q_1$ and $\ann_B(a) \nsubseteq Q_2$ because $R$ is an OMG-subring and $a \in R \cap Q_1 = R \cap Q_2$. Using Lemma \ref{adjoining} with $z = 1$, there exists $w \in \ann_B(a)$ such that $S = R[u+w]_{(R[u+w]\cap M)}$ is an OMG-subring of $B$ with $|S|=|R|$. Now, $u + w \in S$ and $a(u+w) = au = c$, and so $c \in IS$. This completes the base case.


Now suppose $k > 1$, and assume that the lemma holds for all ideals generated by fewer than $k$ generators. Note that $c = y_1b_1 + y_2b_2+\dots+y_kb_k$ for some $b_i \in B$.

We first consider the case where $y_i \in Q_1$ for all $i=1,2, \dots, k$. 
 Because $y_1 \in R$, we have $\ann_B(y_1) \nsubseteq Q_1$ and $\ann_B(y_1) \nsubseteq Q_2$ by defintion of OMG-subring. Using Lemma \ref{adjoining} with $J = \ann_B(y_1)$ and $z=1$, we obtain $w \in \ann_B(y_1)$ such that $S'=R[b_1+w]_{(R[b_1+w]\cap M)}$ is an OMG-subring of $B$ with $|S'|=|R|$. Consider the ideal $(y_2, \dots, y_k)$ of $S'$ and let $c^* = c - y_1(b_1+w).$ Then $c^* \in (y_2, \dots, y_k)B \cap S',$ so by our induction hypothesis, there is an OMG-subring $(S, S \cap M)$ of $B$ such that $S' \subseteq S, |S|=|S'|,$ and $c^* \in (y_2, \dots,y_k)S$. Thus $c^* = y_2s_2+ \dots + y_ks_k$ for some $s_i \in S$. Since $c = c^* + y_1(b_1+w),$ we have $c \in (y_1, y_2, \dots, y_k)S = IS$, and it follows that $S$ is the desired OMG-subring of B.  

Now consider the case where $y_i \notin Q_1$ for some $i$. Without loss of generality, suppose $y_2 \notin Q_1$. Then $y_2 \notin Q_2$. Use Lemma \ref{adjoining} with $J=B$ to find $w \in B$ such that $S'=R[b_1+y_2w]_{(R[b_1+y_2w]\cap M)}$ is an OMG-subring of $B$ with $|S'|=|R|$ and $R \subseteq S'$. Note that 
\[c =y_1b_1 + y_1y_2w - y_1y_2w + y_2b_2 + \dots + y_kb_k = y_1(b_1+y_2w) + y_2(b_2 - y_1w)+ \dots + y_kb_k.\]
Consider the ideal $(y_2,\dots, y_k)$ of $S'$ and let $ c^* = c - y_1(b_1+ y_2w).$ Then, $c^* \in (y_2, \dots, y_k)B \cap S'.$ By our induction assumption, there is an OMG-subring $(S, S \cap M)$ of $B$ such that $S' \subseteq S, |S| = |S'|,$ and $c^* \in (y_2, \dots, y_k)S$. So we have $c^* = y_2s_2 + \dots + y_ks_m$ for some $s_i \in S$. Hence, $c = c^* + y_1(b_1 + y_2w) \in (y_1, \dots, y_k)S = IS$, and it follows that $S$ is the desired OMG-subring of B.
\end{proof}

We now present the definition of the earlier referenced Organized OMG-subring.

\begin{definition}\label{2xomg}
Let $(B, M)$ be a local ring with $B/M$ uncountable.  Let $\Ass(B)=\{Q_1, \dots, Q_n\}$ with $n \geq 2$, and suppose that $\height(Q_i)\leq 1$ for all $1\leq i \leq n$, and $M \notin \Ass(B)$. A quasi-local subring $(R, R\cap M)$ of $B$ is called an \emph{Organized One-or-Minimal Gluing Subring} of $B$, or a Double OMG-subring of $B$, if $R$ is an OMG-subring of $B$ satisfying the property that $R \cap Q_i = R \cap Q_j$ if and only if $i=j$ or $\{i,j\}=\{1,2\}$.
\end{definition}

Note that if $(R,R \cap M)$ is an OMG-subring of $B$ that contains a Double OMG-subring of $B$ then $R$ is a Double OMG-subring of $B$. 

The following property provides sufficient conditions on $Q_1$ and $Q_2$ to ensure the existence of a Double OMG-subring of $B$. 

\begin{manualdefinition}{($\ast$)}\label{*}
Let $Q_1$ and $Q_2$ be associated prime ideals of a ring $B$.  We say $Q_1$ and $Q_2$ satisfy property ($\ast$) if they satisfy one of the following conditions

\begin{enumerate}
    \item $\height(Q_1)=\height(Q_2)=0$ 
    \item  $\height(Q_1)=1, \height(Q_2)=0$, and $Q_2$ is the unique minimal prime ideal contained in $Q_1$
\end{enumerate}
\end{manualdefinition}

Lemma \ref{MYLEMMA!}  shows that property \ref{*} is sufficient for constructing a Double OMG-subring of $B$.


\begin{lemma} \label{MYLEMMA!}
Let $(B, M)$ be a local ring with $B/M$ uncountable and $M \notin \Ass(B)$. Let $\Ass(B)= \{Q_1, \dots, Q_n\}$ with $n \geq 2$ and suppose that $\height(Q_i) \leq 1$ for all $1 \leq i \leq n$. Let $(R, R \cap M)$ be an OMG-subring of $B$. If $Q_1$ and $Q_2$ satisfy property \ref{*}, then there exists a Double OMG-subring $(S, S \cap M)$ of $B$ such that $R \subseteq S$ and $|R| = |S|$. 
\end{lemma}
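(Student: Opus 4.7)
My plan is to construct the desired Double OMG-subring $S$ by iteratively adjoining to $R$ one distinguishing element for each pair of associated primes that must remain distinct. Enumerate the pairs $\mathcal{P} = \{\{i,j\} : 1 \leq i < j \leq n,\ \{i,j\} \neq \{1,2\}\}$, and list them as $P_1,\dots,P_N$. Starting from $R_0 := R$, I will inductively build OMG-subrings $R_0 \subseteq R_1 \subseteq \cdots \subseteq R_N$ of $B$ with $|R_k| = |R|$, arranged so that $R_k \cap Q_a \neq R_k \cap Q_b$ for every $P_\ell = \{a,b\}$ with $\ell \leq k$. The final $S := R_N$ will then be an OMG-subring whose pattern of intersections with the $Q_i$'s matches Definition \ref{2xomg} exactly.

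For the inductive step, fix $k$ and write $P_k = \{a,b\}$, where at least one index is $\geq 3$. Using property \ref{*} together with $\height(Q_\ell) \leq 1$, one checks that whenever $i \geq 3$, both $Q_i \nsubseteq Q_1$ and $Q_i \nsubseteq Q_2$ hold: in case (1) of \ref{*}, $Q_1$ and $Q_2$ are both minimal, so any prime contained in either equals it; in case (2), $Q_2$ is minimal and $Q_2$ is the unique minimal subprime of $Q_1$, so any such containment again forces $Q_i \in \{Q_1, Q_2\}$, contradicting $i \geq 3$. Now choose the labels $\{i,j\} = \{a,b\}$ with $i \geq 3$ and, in addition, $Q_i \nsubseteq Q_j$: this is automatic when $j \in \{1,2\}$ by the observation just made, and when both $a,b \geq 3$, distinct primes cannot satisfy mutual containment, so at least one of the two assignments of $i$ delivers $Q_i \nsubseteq Q_j$. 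Apply Lemma \ref{primeavoidbig} with $I = Q_i$, $C = \{Q_1, Q_2, Q_j\}$, and $D = D_1 \cup D_2 \cup \{0\}$, where $D_\ell$ ($\ell \in \{1,2\}$) is a set of coset representatives modulo $Q_\ell$ for those elements of $B/Q_\ell$ algebraic over $R_{k-1}/(R_{k-1} \cap Q_\ell)$. Since $|D| \leq |R| < |B/M|$ and $|C| = 3$, the cardinality hypothesis is met, and the lemma produces $x \in Q_i$ avoiding every coset $P + r$ with $P \in C$, $r \in D$. In particular, $x + Q_\ell$ is transcendental over $R_{k-1}/(R_{k-1} \cap Q_\ell)$ for $\ell = 1, 2$, and $x \notin Q_j$ (taking $P = Q_j, r = 0$).

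Set $R_k := R_{k-1}[x]_{(R_{k-1}[x] \cap M)}$. By Lemma \ref{transcend}, $R_k$ is an OMG-subring of $B$ with $|R_k| = |R_{k-1}| = |R|$. Moreover $x \in R_k \cap Q_i$ while $x \notin Q_j$, so $R_k \cap Q_i \neq R_k \cap Q_j$, and the distinctions achieved at earlier stages are preserved under containment. After $N$ steps, $S := R_N$ satisfies every clause of Definition \ref{2xomg} and contains $R$ with the same cardinality. The main obstacle is the containment case analysis confirming $Q_i \nsubseteq Q_1, Q_2, Q_j$ at each step; once that is in hand, the proof is a routine iterated application of Lemmas \ref{primeavoidbig} and \ref{transcend}.
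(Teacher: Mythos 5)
Your proof is correct, and it is structurally a bit different from the paper's. The paper organizes the work \emph{by associated prime}: it first runs through the height-one primes $J_1,\dots,J_k$ (indices $>2$), for each one choosing a single element $z_i \in J_i$ avoiding every other associated prime, and then runs through the height-zero primes $I_1,\dots,I_l$, for each one choosing $z'_i \in I_i$ avoiding $Q_1$ and every other minimal prime (it need not avoid the height-one associated primes, since those separations were already secured in the first pass). Each such element is adjoined through Lemma \ref{adjoining} with $b=z=z_i$ and $J=M$, so what actually enters the ring is $z_i(1+w)$ with $1+w$ a unit — this keeps the membership/non-membership pattern of $z_i$ intact. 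Your argument instead proceeds \emph{pair by pair}: for each unordered pair $\{a,b\}\neq\{1,2\}$ you pick a side $i\geq 3$ with $Q_i \nsubseteq Q_j$, invoke Lemma \ref{primeavoidbig} directly with $I=Q_i$, $C=\{Q_1,Q_2,Q_j\}$, and $D=D_1\cup D_2\cup\{0\}$, and then apply Lemma \ref{transcend} to the element $x$ so obtained. This is essentially inlining the content of Lemma \ref{adjoining} rather than calling it, and it avoids the paper's two-pass structure (and the attendant observation that an element chosen for a minimal prime may lie in some height-one associated prime) at the cost of roughly $n^2$ adjunctions rather than roughly $n$. The containment case analysis you carry out — showing $Q_i \nsubseteq Q_1$, $Q_i \nsubseteq Q_2$ for $i\geq 3$ via property \ref{*}, and choosing $i$ to break any one-sided containment within $\{a,b\}$ — is exactly the use being made of property \ref{*} and $\height(Q_i)\leq 1$ in the paper, and is correct. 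The preservation of earlier separations under passage to a larger subring, which you invoke at the end, is automatic: if $y\in R_{k-1}\cap Q_a\setminus Q_b$, then the same $y$ witnesses $R_k\cap Q_a \neq R_k\cap Q_b$.

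One small streamlining you could make: instead of adjoining $x$ directly, you could invoke Lemma \ref{adjoining} with $b=z=x'$ (for some $x' \in Q_i$ chosen by ordinary prime avoidance to lie outside $Q_1, Q_2, Q_j$) and $J=M$, exactly as the paper does; this would let you cite the existing lemma rather than re-running the coset-avoidance argument. But as written, your proof is sound.
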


\begin{proof}
If $n = 2$, then $S = R$ works, so assume $n > 2$.
We first organize the height 1 associated prime ideals of $B$. Let 
\[\Ass(B)_1 := \{Q_h | h > 2, ht(Q_h)=1\} = \{J_1, \dots, J_k\}.\]
If $\Ass(B)_1$ is not empty,
let $X =  \Ass(B) \setminus J_1$. Because $\height(J_1)=1$, if $Q \in X$, we have $J_1 \nsubseteq Q$. By the Prime Avoidance Theorem, $J_1 \nsubseteq \bigcup_{Q \in X} Q$. Hence, there exists a $z_1$ such that $z_1 \in J_1$ and $ z_1 \notin Q$ for all $Q \in X$. Note that by assumption, $M \nsubseteq Q_1$ and $M \nsubseteq Q_2$. By Lemma \ref{adjoining}, there is a $w \in M$ such that $S_1 = R[z_1 + z_1w]_{(R[z_1+z_1w] \cap M)}$ is an OMG-subring of $B$ and $|S_1| = |R|$. We observe that $S_1$ contains the element $z_1 + z_1w = z_1(1+w)$. Because $w \in M$, $1 + w$ is a unit of $B$. Hence $z_1(1+w) \notin Q$ for all $Q \in X$ implying that $S_1 \cap J_1 \neq S_1 \cap Q$ for all $Q \in X$. 

Repeat the argument replacing $R$ with $S_1$ and $z_1$ with $z_2$ where $z_2$ is an element of $B$ such that $z_2 \in J_2, z_2 \notin Q$ for all $Q \in \Ass(B)\setminus J_2$. This will generate an OMG-subring $S_2$ where $S_2 \cap J_2 \neq S_2 \cap Q$ for all $Q \in \Ass(B)\setminus J_2$. Continue until we obtain an OMG-subring $S_k$ such that $|S_k| = |R|$ and, if $J_i \in \Ass(B)_1$ and $Q \in \Ass(B)$ with $Q \neq J_i$, then $S_k \cap J_i \neq S_k \cap Q$. If $\Ass(B)_1$ is empty, let $S_k = R$.

We now organize the height 0 associated prime ideals. Let
\[\Ass(B)_0 := \{Q_i | i > 2, ht(Q_i)=0\} = \{I_1, \dots, I_l\}.\]
If $\Ass(B)_0$ is not empty, consider $I_1$ and let $P$ be a minimal prime ideal of $B$ with $P \neq I_1$. Note that $I_1 \nsubseteq P$ and, in particular $I_1 \nsubseteq Q_2$. By assumption, $I_1 \nsubseteq Q_1$ because either $\height(Q_1)=0$ or $Q_2$ is the unique minimal prime ideal contained in $Q_1.$ By the Prime Avoidance Theorem, there exists an element $z'_1$ of $B$ such that $z'_1 \in I_1, z'_1 \notin Q_1$ and if $P$ is a minimal prime ideal of $B$ not equal to $I_1$ then $z'_1 \not\in P$. By Lemma \ref{adjoining}, there is a $w' \in M$ such that $S_{k+1} = S_k[z'_1 + z'_1w']_{(S_k[z'_1+z'_1w'] \cap M)}$ is an OMG-subring of $B$ and $|S_{k+1}| = |R|$. Note that $S_{k+1}$ contains the element $z'_1 + z'_1w' = z'_1(1+w')$. Because $w' \in M$, $1 + w'$ is a unit. Therefore, $z'_1(1+w') \notin Q_1$ and, if $P$ is a minimal prime ideal of $B$ with $P \neq I_1$ then $ z'_1(1+w') \notin P$. So we have $S_{k+1} \cap I_1 \neq S_{k+1} \cap Q_1$ and, if $P$ is a minimal prime ideal of $B$ with $P \neq I_1$, then $S_{k + 1} \cap I_1  \neq S_{k+1} \cap P$. 

Obtain the OMG-subring $S_{k + 2}$ by repeating this argument replacing $S_k$ with $S_{k+1}$ and $z'_1$ with $z'_2$ where $z'_2$ is an element of $B$ such that $z'_2 \in I_2, z'_2 \notin Q_1$ and, if $P$ is a minimal prime ideal of $B$ with $P \neq I_2$ then $z'_2 \not\in P$. Continue until we obtain an OMG-subring $S_{k+l}$. If $\Ass(B)_0$ is empty, let $S_{k + l} = S_k$. Then $|S_{k + l}| = |R|$ and $S_{k+l} \cap Q_i = S_{k + l} \cap Q_j$ if and only if $i=j$ or $\{i,j\}=\{1,2\}$. It follows that $S_{k+l}$ is the desired Double OMG-subring of $B$. 
\end{proof}

The next lemma ensures that, under certain conditions, the union of an increasing chain of Double OMG-subrings is itself a Double OMG-subring. We note that Lemma \ref{unioning} is a modified version of Lemma 2.10 in \cite{mg}.


\begin{lemma}\label{unioning}
Let $(B, M)$ be a local ring with $B/M$ uncountable and $M \notin \Ass(B)$. Let $\Ass(B)=\{Q_1, Q_2, \dots, Q_n\}$ with $n \geq 2$. Suppose $\height(Q_i) \leq 1$ for all $1 \leq i \leq n$. Let $\Omega$ be a well-ordered index set and suppose that $(R_\beta, R_\beta \cap M)$ for $\beta \in \Omega$ is a family of Double OMG-subrings of $B$ such that, if $\alpha, \mu \in \Omega$ with $\alpha < \mu,$ then $R_\alpha \subseteq R_{\mu}$. Then $S=\bigcup_{\beta \in \Omega}R_\beta$ is an infinite subring of $B$ such that for all $a \in R \cap Q_1$, $\ann_B(a) \not\subseteq Q_1$, $\ann_B(a) \not\subseteq Q_2$ and such that $S \cap Q_i = S \cap Q_j$ if and only if $i=j$ or $i, j \in \{1,2\}$. Furthermore, if there is some cardinal $\lambda < |B/M|$ such that $|R_\beta| \leq \lambda$ for all $\beta \in \Omega$, and if $|\Omega| < |B/M|$, then $|S| \leq \max\{ \lambda, |\Omega|\}$ and $S$ is a Double OMG-subring of $B$.
\end{lemma}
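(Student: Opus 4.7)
The plan is to verify each of the claimed properties of $S$ by reducing it to the corresponding property of a suitable individual $R_\beta$, exploiting the key observation that any finite collection of elements of $S$ lives in a single $R_\beta$ because the $R_\beta$ form a chain.

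First I would check that $S$ is an infinite subring of $B$ which is quasi-local with maximal ideal $S \cap M$. Closure under the ring operations is automatic from the chain property. For quasi-locality, any $s \in S$ with $s \notin S \cap M$ lies in some $R_\beta$ and satisfies $s \notin R_\beta \cap M$; since $R_\beta$ is quasi-local, $s$ is a unit of $R_\beta$, hence of $S$. Infinitude is immediate because each $R_\beta$ is infinite.

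Next I would verify the three substantive conclusions. Given $a \in S \cap Q_1$, choose $\beta$ with $a \in R_\beta$; then $a \in R_\beta \cap Q_1 = R_\beta \cap Q_2 \subseteq S \cap Q_2$, and the reverse inclusion is symmetric, so $S \cap Q_1 = S \cap Q_2$. For the annihilator condition, the OMG property of the same $R_\beta$ gives $\ann_B(a) \not\subseteq Q_1$ and $\ann_B(a) \not\subseteq Q_2$. To establish the ``only if'' half of condition 2, fix any $\beta_0 \in \Omega$ and consider indices $i \neq j$ with $\{i,j\} \neq \{1,2\}$: since $R_{\beta_0}$ is already a Double OMG-subring, there is an element $r \in R_{\beta_0}$ lying in exactly one of $Q_i$, $Q_j$, and this same $r$ witnesses $S \cap Q_i \neq S \cap Q_j$.

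For the cardinality claim, I would estimate
\[|S| \;\leq\; \sum_{\beta \in \Omega} |R_\beta| \;\leq\; |\Omega| \cdot \lambda \;=\; \max\{|\Omega|, \lambda\},\]
valid because $\lambda \geq \aleph_0$ (each $R_\beta$ is infinite). Since $\lambda < |B/M|$ and $|\Omega| < |B/M|$ and $|B/M|$ is uncountable, we obtain $|S| < |B/M|$, which is the one remaining ingredient needed to conclude that $S$ is an OMG-subring of $B$. Because $S$ contains the Double OMG-subring $R_{\beta_0}$, the observation immediately following Definition \ref{2xomg} then promotes $S$ to a Double OMG-subring.

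The main point to be careful about is the separation step in condition 2: one must invoke the Double OMG property of a fixed $R_{\beta_0}$, rather than try to argue uniformly along the chain, because that is precisely what prevents new coincidences among the $S \cap Q_k$ from appearing when passing to the union. The rest of the argument is routine chain-of-subrings bookkeeping, made legitimate by the assumption that $\Omega$ is totally (indeed well-)ordered so that any finitely many elements of $S$ fit inside a common $R_\beta$.
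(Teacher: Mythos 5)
Your proof is correct and follows essentially the same approach as the paper: reduce every claimed property of $S$ to the corresponding property of an individual $R_\beta$ using the chain structure, then conclude $|S| < |B/M|$ from the cardinality estimate so that $S$ qualifies as an OMG-subring containing a Double OMG-subring. Your write-up is in fact slightly more explicit than the paper's, in that you spell out the quasi-locality of $S$ and make the separating-element argument for the ``only if'' direction of condition (4) explicit by fixing a single $R_{\beta_0}$, but these are exactly the details the paper leaves to the reader.
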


\begin{proof}
Since $R_{\beta}$ is infinite for all $\beta \in \Omega$, $S$ is infinite. Given some $a \in S \cap Q_1,$ we know that $a \in R_\beta \cap Q_1$ for some Double OMG-subring $R_\beta$ of $B$ and so $\ann_B(a) \nsubseteq Q_1$ and $\ann_B(a) \nsubseteq Q_2$.
Note that $S \cap Q_i = S \cap Q_j$ if and only if $i=j$ or $i, j \in \{1,2\}$ follows since, for all $\beta \in \Omega$, $R_{\beta} \cap Q_i = R_{\beta} \cap Q_j$ if and only if $i=j$ or $i, j \in \{1,2\}$.


Now suppose there is some cardinal $\lambda < |B/M|$ such that $|R_\beta| \leq \lambda$ for all $\beta \in \Omega$, and $|\Omega|<|B/M|$. Then $S \leq \lambda|\Omega|=\max\{\lambda, |\Omega|\}$. So, $|S| < |B/M|,$ and it follows that $(S, S \cap M)$ is a Double OMG-subring of $B$.
\end{proof}

In Lemma \ref{preordering} we construct a Double OMG-subring of $B$ that satisfies many of our desired properties simultaneously. In order to prove the lemma, the following definition will be helpful.


\begin{definition}
Let $\psi$ be a well-ordered set and let $\alpha \in \psi$. Define $$\gamma(\alpha)=\sup\{\beta \in \psi | \beta < \alpha\}.$$
\end{definition}


\begin{lemma}\label{preordering}
Let $(B, M)$ be a local ring with $B/M$ uncountable and let $\Ass(B)=\{Q_1, Q_2,\dots,Q_n\}$ with $n \geq 2$. Suppose also that $M \not\in \Ass(B)$, $\height(Q_i) \leq 1$ for all $i \in \{1, \dots, n\}$ and $Q_1, Q_2$ satisfy property \ref{*}. In addition, assume $(R, R \cap M)$ is an OMG-subring of $B$ and let $b \in B.$ Then there exists a Double OMG-subring $(S, S \cap M)$ of $B$ such that $R \subseteq S$, $|R|=|S|$, $b + M^2$ is in the image of the map $S \longrightarrow B/M^2,$ and $IB \cap S = I$ for every finitely generated ideal $I$ of $S$.
\end{lemma}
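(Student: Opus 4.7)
The plan is to construct $S$ by a double transfinite iteration. First, I would apply Lemma \ref{MYLEMMA!} to obtain a Double OMG-subring $R'$ of $B$ with $R \subseteq R'$ and $|R'| = |R|$, then apply Lemma \ref{cosetrep} to $R'$ to obtain an OMG-subring $T_0$ of $B$ with $R' \subseteq T_0$, $|T_0| = |R|$, and $T_0$ containing an element of the coset $b + M^2$. Since $T_0 \supseteq R'$ and $R'$ is a Double OMG-subring, the remark following Definition \ref{2xomg} gives that $T_0$ is itself a Double OMG-subring of $B$.

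Set $\lambda := |R|$, which is infinite (OMG-subrings are required to be infinite), so $\lambda \cdot \lambda = \lambda$ and in particular there are at most $\lambda$ finite tuples in $T_0$. I would build a countable chain $T_0 \subseteq T_1 \subseteq T_2 \subseteq \cdots$ of Double OMG-subrings of $B$, each of cardinality $\lambda$, where $T_{n+1}$ is obtained from $T_n$ by ``closing up'' every pair $(I, c)$ with $I$ a finitely generated ideal of $T_n$ and $c \in IB \cap T_n$. There are at most $\lambda$ such pairs, so we may enumerate them as $\{(I_\alpha, c_\alpha) : \alpha < \lambda\}$. Within stage $n$, build a nested chain $T_n = U_0 \subseteq U_1 \subseteq \cdots \subseteq U_\alpha \subseteq \cdots$ indexed by $\alpha \leq \lambda$: at a successor step, apply Lemma \ref{closer} to $U_\alpha$ with the finitely generated ideal $I_\alpha U_\alpha$ and the element $c_\alpha \in I_\alpha B \cap T_n \subseteq (I_\alpha U_\alpha)B \cap U_\alpha$, obtaining $U_{\alpha+1}$ of cardinality $\lambda$ with $c_\alpha \in I_\alpha U_{\alpha+1}$; at a limit step, apply Lemma \ref{unioning} to the chain $\{U_\beta : \beta < \alpha\}$ (whose index set has cardinality $|\alpha| \leq \lambda < |B/M|$) to conclude that $U_\alpha = \bigcup_{\beta < \alpha} U_\beta$ is a Double OMG-subring of $B$ with $|U_\alpha| \leq \lambda$. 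Set $T_{n+1} = U_\lambda$.

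Finally, I would take $S = \bigcup_{n < \omega} T_n$ and apply Lemma \ref{unioning} once more to the countable chain $\{T_n\}$ to conclude $S$ is a Double OMG-subring of $B$ with $|S| = \lambda = |R|$. The coset condition persists from $T_0 \subseteq S$. For the ideal condition, given a finitely generated ideal $J = (y_1, \dots, y_k)S$ of $S$ and an element $c \in JB \cap S$, I would pick $n$ with $y_1, \dots, y_k, c \in T_n$; then $I := (y_1, \dots, y_k)T_n$ is a finitely generated ideal of $T_n$ with $c \in IB \cap T_n$, so by construction $c \in I T_{n+1} \subseteq JS$, yielding $JB \cap S = J$ (the reverse inclusion is automatic).

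The main obstacle is the cardinality bookkeeping for the double transfinite construction: one must verify at each successor that Lemma \ref{closer} applies and preserves the size $\lambda$, at each limit ordinal $\alpha \leq \lambda$ that the hypotheses $|\Omega| < |B/M|$ and $|U_\beta| \leq \lambda$ of Lemma \ref{unioning} are met, and at the outer level that the countably many $T_n$ still have union of size $\lambda$. The enabling fact is that $\lambda = |R| < |B/M|$ is infinite and satisfies $\lambda^{<\omega} \cdot \lambda = \lambda$, so the number of pairs $(I, c)$ to kill at stage $n$ is indeed bounded by $\lambda$, allowing the stage-$n$ chain to have length $\lambda$ and thus remain strictly below $|B/M|$.
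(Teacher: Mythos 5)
Your proposal is correct and follows essentially the same strategy as the paper: first use Lemma \ref{cosetrep} and Lemma \ref{MYLEMMA!} (the paper does them in the opposite order, which is immaterial) to produce an initial Double OMG-subring containing a representative of $b+M^2$, then run a transfinite recursion over the pairs $(I,c)$ using Lemma \ref{closer} at successors and Lemma \ref{unioning} at limits, and finally take a countable outer chain and its union to force $IB \cap S = I$. The only difference is cosmetic — you make the ordinal enumeration of length $\lambda = |R|$ explicit, whereas the paper phrases the same recursion via a well-ordering of $\psi$ with no maximal element and the $\gamma(\mu)$ device — and your cardinality bookkeeping ($\lambda^{<\omega}\cdot\lambda = \lambda < |B/M|$) matches the paper's.
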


\begin{proof}
First use Lemma \ref{cosetrep} to obtain an OMG-subring $(R', R' \cap M)$ of $B$ such that $R \subseteq R'$, $|R'| = |R|,$ and $R'$ contains an element of $b + M^2$. Next, use Lemma \ref{MYLEMMA!} to get a Double OMG-subring $(R'', R'' \cap M)$ of $B$ such that $R' \subseteq R''$ and $|R''|=|R|'$. Define 
\[ \psi = \{(I, c) | I \ \text{is a finitely generated ideal of}\ R'' \  \text{and} \ c \in IB \cap R''\}.\]
Well-order $\psi$ so that it has no maximal element, and let 0 denote its first element. Note that $|\psi| \leq |R''| = |R|$. We proceed by using transfinite induction. Recursively define a family of Double OMG-subrings $(R_\mu, R_\mu \cap M)$ of $B$ for each $\mu \in \psi$ such that $|R_{\mu}|=|R|$ and, if $\alpha, \rho \in \psi$ with $\alpha < \rho \leq \mu$, then $R_\alpha \subseteq R_{\rho}$. Define $R_0 = R''$. Now, for $\mu \in \psi$ assume that $R_\beta$ has been defined for all $\beta < \mu$ such that $(R_\beta, R_\beta \cap M)$ is a Double OMG-subring of $B, |R_\beta| = |R|,$ and if $\alpha, \rho \leq \beta$ with $\alpha < \rho$, then $R_\alpha \subseteq R_{\rho}$. Suppose $\gamma(\mu) < \mu$, and let $\gamma(\mu) = (I, c).$ Then define $(R_\mu, R_\mu \cap M)$ to be the Double OMG-subring of $B$ obtained from Lemma \ref{closer} such that $R_{\gamma(\mu)} \subseteq R_\mu, |R_{\gamma(\mu)}| = |R_{\mu}|,$ and $c \in IR_{\mu}$. On the other hand, if $\gamma(\mu) = \mu,$ define $R_\mu = \bigcup_{\beta<\mu}R_\beta.$ In this case, by Lemma \ref{unioning}, $(R_\mu, R_\mu \cap M)$ is a Double OMG-subring of $B$ with $|R_\mu| = |R|.$ In either case, we have that $(R_\mu, R_\mu \cap M)$ is a Double OMG-subring of $B$, $|R_\mu| = |R|$, and if $\alpha, \rho \leq \mu$ with $\alpha < \rho$, then $R_\alpha \subseteq R_\rho$.

Let $S_1 = \bigcup_{\mu\in \psi}R_\mu$. By Lemma \ref{unioning}, $(S_1, S_1 \cap M)$ is a Double OMG-subring of $B$ and $|S_1|=|R|.$ Let $I$ be a finitely generated ideal of $R''$ and let $c \in IB \cap R''.$ Then $(I, c) = \gamma(\mu)$ for some $\mu \in \psi$ with $\gamma(\mu) < \mu$. By construction, $c \in IR_\mu \subseteq IS_1.$ It follows that $IB \cap R'' \subseteq IS_1$ for every finitely generated ideal $I$ of $R''$.

Repeat this process with $R''$ replaced by $S_1$ to obtain a Double OMG-subring $(S_2, S_2 \cap M)$ of $B$ with $S_1 \subseteq S_2$, $|S_2| = |R|$, and $IB \cap S_1 \subseteq IS_2$ for every finitely generated ideal $I$ of $S_1$. Continue to obtain a chain of Double OMG-subrings $R'' \subseteq S_1 \subseteq S_2 \subseteq \dots$ with $S_i \subseteq S_{i+1}$, $|S_{i}| = |R|$, and $IB \cap S_i \subseteq IS_{i+1}$ for every finitely generated ideal of $I$ of $S_i$.

Let $S = \bigcup_{i=1}^{\infty} S_i$. By Lemma \ref{unioning}, $(S, S\cap M)$ is a Double OMG-subring of $B$ with $|S|=|R|$. Now suppose $I$ is a finitely generated ideal of $S,$ and let $c \in IB \cap S$. Then $I = (s_1, \dots, s_k)$ for $s_i \in S.$ Choose $N$ such that $c, s_1, \dots, s_k \in S_N.$ Then $c \in IB \cap S_N \subseteq IS_{N+1} \subseteq IS$. It follows that $IB \cap S =I$ and so $S$ is the desired Double OMG-subring of $B$. 
\end{proof}

To apply Proposition \ref{completionsame}, we need the subring $S$ of $B$ to satisfy the conditions that the map $S \longrightarrow B/M^2$ is onto and that $IB \cap S = I$ for every finitely generated ideal $I$ of $S$. In Lemma \ref{ordering}, we construct our subring to satisfy both of these properties.

\begin{lemma}\label{ordering}
Let $(B, M)$ be a local ring with $B/M$ uncountable and let $\Ass(B)=\{Q_1, Q_2,\dots,Q_n\}$ with $n \geq 2$. Suppose also that $M \not\in \Ass(B)$, $\height(Q_i) \leq 1$ for all $i \in \{1, \dots, n\}$ and $Q_1, Q_2$ satisfy property \ref{*}. In addition, assume $(R, R \cap M)$ is an OMG-subring of $B$. Then there exists a subring $(S, S \cap M)$ of $B$ such that $R \subseteq S$, $S \cap Q_i = S \cap Q_j$ if and only if $i=j$ or $i, j \in \{1,2\}$, the map $S \longrightarrow B/M^2$ is onto and $IB \cap S = I$ for every finitely generated ideal $I$ of $S$.
\end{lemma}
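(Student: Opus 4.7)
The plan is to iterate Lemma \ref{preordering} transfinitely over a well-ordering of $B/M^2$, taking unions at limit ordinals via Lemma \ref{unioning}, in order to hit every coset of $B/M^2$ while preserving the finitely-generated-ideal-closure condition. Since $B$ is Noetherian local with $|B/M|$ infinite, $M/M^2$ is a finite-dimensional $B/M$-vector space and hence $|B/M^2| = |B/M|$. Let $\kappa$ denote the initial ordinal of $|B/M|$ and enumerate coset representatives of $B/M^2$ as $\{b_\beta : \beta < \kappa\}$. First apply Lemma \ref{MYLEMMA!} to obtain an initial Double OMG-subring $T_0 \supseteq R$ with $|T_0| = |R|$.

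Build a chain $(T_\beta)_{\beta < \kappa}$ of Double OMG-subrings of $B$ by transfinite recursion. At a successor $\beta + 1$, apply Lemma \ref{preordering} with input $T_\beta$ and parameter $b = b_\beta$ to obtain a Double OMG-subring $T_{\beta+1} \supseteq T_\beta$ of the same cardinality, containing an element of $b_\beta + M^2$, and satisfying $IB \cap T_{\beta+1} = I$ for every finitely generated ideal $I$ of $T_{\beta+1}$. At a limit $\beta < \kappa$, set $T_\beta = \bigcup_{\gamma < \beta} T_\gamma$; by induction $|T_\gamma| \leq \max(|R|, |\gamma|) < |B/M|$ for all $\gamma < \beta$, and $|\beta| < |B/M|$ since $\beta < \kappa$, so Lemma \ref{unioning} applies and yields that $T_\beta$ is again a Double OMG-subring. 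Define $S = \bigcup_{\beta < \kappa} T_\beta$.

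The verifications are routine. Clearly $R \subseteq T_0 \subseteq S$; for any $b \in B$ one has $b \equiv b_\beta \pmod{M^2}$ for some $\beta$, and $T_{\beta+1}$ contains a representative of that coset, so $S \to B/M^2$ is surjective; the intersection property $S \cap Q_i = S \cap Q_j$ iff $i = j$ or $\{i,j\} = \{1,2\}$ follows from the unqualified part of Lemma \ref{unioning} applied to $S$ viewed as the chain union of the Double OMG-subrings $T_\beta$. For the finitely generated ideal condition, given $I = (s_1, \dots, s_k)S$ and $c \in IB \cap S$, use that $S$ is a chain union to pick $\beta < \kappa$ with $s_1, \dots, s_k, c \in T_\beta$, and set $I_\beta = (s_1, \dots, s_k) T_\beta$, so that $I_\beta B = IB$. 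The property $I_\beta B \cap T_\beta = I_\beta$ holds inductively: at successors from Lemma \ref{preordering}, and at limits by applying the same pull-back argument inside $T_\beta$. Thus $c \in I_\beta \subseteq IS = I$.

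The main obstacle is cardinality management, since the iteration length $\kappa$ coincides with the bound $|B/M|$ needed to invoke Lemma \ref{preordering} and the refined conclusion of Lemma \ref{unioning}. This is circumvented by taking $\kappa$ to be the initial ordinal of $|B/M|$, which forces $|\beta| < |B/M|$ for every $\beta < \kappa$, and the induction then guarantees $|T_\beta| \leq \max(|R|, |\beta|) < |B/M|$ at each proper stage. At the terminal union $S$ the index set has full cardinality $|B/M|$, so $S$ need not itself be a Double OMG-subring, but the lemma does not demand this: it requires only the intersection and finite-ideal-closure conditions, which are preserved by the chain union.
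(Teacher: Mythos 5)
Your proposal is correct and follows essentially the same route as the paper's proof: well-order $B/M^2$ so that each element has fewer than $|B/M^2| = |B/M|$ predecessors, transfinitely iterate Lemma \ref{preordering} at successor stages and take unions via Lemma \ref{unioning} at limit stages, and verify the ideal-closure condition by pulling any finitely many witnesses back into a single stage of the chain. Your remark that the terminal union $S$ need not itself be a Double OMG-subring (since the index set has full cardinality $|B/M|$) and that only the unqualified part of Lemma \ref{unioning} is needed for the intersection property is an accurate reading of the situation, matching what the paper does implicitly.
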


\begin{proof}
Let $\Omega = B/M^2$ and well-order $\Omega$ so that each element has fewer than $|B/M^2|$ predecessors. Note that, since $B/M$ is infinite, $|B/M^2| = |B/M|$. Let $0$ denote the first element of $\Omega$ and let $(R_0, R_0 \cap M)$ be the Double OMG-subring obtained from $R$ using Lemma \ref{preordering} with $b = 0$. We recursively define a family $(R_{\beta}, R_{\beta} \cap M)$ for $\beta \in \Omega$ of Double OMG-subrings of $B$, so that $IB \cap R_{\beta} = I$ for every finitely generated ideal $I$ of $R_{\beta}$ and if $\alpha < \rho$, then $R_{\alpha} \subseteq R_{\rho}$. Let $\mu \in \Omega$ and assume that $R_{\beta}$ has been defined for all $\beta < \mu$.  If $\gamma(\mu) < \mu$ then let $\gamma(\mu) = b + M^2$, and define $(R_{\mu}, R_{\mu} \cap M)$ to be the Double OMG-subring of $B$ obtained from Lemma \ref{preordering} so that $R_{\gamma(\mu)} \subseteq R_{\mu}$, $|R_{\gamma(\mu)}| = |R_{\mu}|$, $b + M^2$ is in the image of the map $R_{\mu} \longrightarrow B/M^2$, and $IB \cap R_{\mu} = I$ for every finitely generated ideal $I$ of $R_{\mu}$. If $\gamma(\mu) = \mu$ then define $R_{\mu} = \bigcup_{\beta < \mu}R_{\beta}$. By Lemma \ref{unioning}, $R_{\mu}$ is a Double OMG-subring of $B$. Now suppose $I = (a_1, \ldots, a_k)$ is a finitely generated ideal of $R_{\mu}$, and let $c \in IB \cap R_{\mu}$.  Then there is a $\beta \in \Omega$ with $\beta < \mu$ such that $\{c, a_1, \ldots ,a_k\} \in R_{\beta}$.  
Hence, $c \in IB \cap R_{\beta} = (a_1, \ldots ,a_k)R_{\beta} \subseteq I$. It follows that $IB \cap R_{\mu} = I$ for every finitely generated ideal $I$ of $R_{\mu}$.

Now let $S = \bigcup_{\beta \in \Omega}R_{\beta}$. By construction, the map $S \longrightarrow B/M^2$ is onto, and by the same argument as above, $IB \cap S = I$ for every finitely generated ideal $I$ of $S$.  Finally, Lemma \ref{unioning} gives us that $S \cap Q_i = S \cap Q_j$ if and only if $i=j$.
\end{proof}

We are now ready to state and prove our main result, the Generalized Gluing Theorem. Note that much of the work we have done up to this point has been to identify a local subring $S$ of $B$ possessing the properties necessary to apply Proposition \ref{completionsame}. This proposition allows us to conclude that $\widehat{S}=\widehat{B}$ and $S/(S\cap M)$ is uncountable. We now apply Proposition \ref{completionsame} to do exactly this.


\begin{theorem}(The Generalized Gluing Theorem)\label{bigassthm}
Let $(B, M)$ be a local ring containing the rationals and assume that $B/M$ is uncountable. Suppose that $\Ass(B) = \{Q_1, Q_2, \dots, Q_n\}$ with $n \geq 2$ and that $M \not\in \Ass(B)$. Suppose also that $\height(Q_i) \leq 1$ for all $i \in \{1, 2, \dots, n\}$ and that $Q_1, Q_2$ satisfy property \ref{*}. Then there is a local ring $S \subseteq B$ with maximal ideal $S \cap M$ such that 

\begin{enumerate}
    \item $S$ contains the rationals 
    \item $\widehat{S}=\widehat{B}$ 
    \item $S/(S \cap M)$ is uncountable 
    \item $S \cap Q_i = S \cap Q_j$ if and only if $i=j$ or $i, j \in \{1, 2\}$.
\end{enumerate}

\end{theorem}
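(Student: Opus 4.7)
The plan is to assemble the theorem as a direct consequence of Lemma \ref{ordering} together with Proposition \ref{completionsame}, after supplying an initial OMG-subring that one can feed into Lemma \ref{ordering}. Virtually all of the technical work has been carried out in the preceding lemmas, so the proof is mostly a matter of choosing a starting ring and then applying the machinery.

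First I would produce a base OMG-subring. Since $B$ contains the rationals, set $R_0 = \mathbb{Q}_{(\mathbb{Q} \cap M)}$, viewed as a subring of $B$. I would check that $(R_0, R_0 \cap M)$ is an OMG-subring of $B$ as in Definition \ref{omg}: $R_0$ is infinite, $|R_0| = \aleph_0 < |B/M|$ because $B/M$ is uncountable, and $R_0 \cap Q_1 = R_0 \cap Q_2 = (0)$ since every nonzero element of $R_0$ is a unit in $B$ (it is a unit already in $R_0$, so it cannot lie in the proper ideal $Q_i$). The annihilator hypothesis is vacuous here because the only element of $R_0 \cap Q_1 = R_0 \cap Q_2$ is $0$, and $\mathrm{ann}_B(0) = B \not\subseteq Q_i$ for $i = 1, 2$. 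Hence $R_0$ is an OMG-subring of $B$.

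Next I would apply Lemma \ref{ordering} to $R_0$. The hypotheses of that lemma are exactly the hypotheses of the theorem: $(B, M)$ is local with $B/M$ uncountable, $\mathrm{Ass}(B) = \{Q_1, \dots, Q_n\}$ with $n \geq 2$ and $M \not\in \mathrm{Ass}(B)$, $\mathrm{ht}(Q_i) \leq 1$ for all $i$, and $Q_1, Q_2$ satisfy property \ref{*}. The lemma then produces a subring $(S, S \cap M)$ of $B$ with $R_0 \subseteq S$, such that $S \cap Q_i = S \cap Q_j$ if and only if $i = j$ or $\{i,j\} \subseteq \{1,2\}$, the natural map $S \to B/M^2$ is surjective, and $IB \cap S = I$ for every finitely generated ideal $I$ of $S$. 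This $S$ is the candidate ring, and conclusions (1) and (4) of the theorem are already in hand: $\mathbb{Q} \subseteq R_0 \subseteq S$ gives (1), and the gluing behavior of the intersections gives (4).

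Finally I would invoke Proposition \ref{completionsame} with this $S$. Its hypotheses, namely that $S \to B/M^2$ is onto and that $IB \cap S = I$ for every finitely generated ideal $I$ of $S$, are precisely what Lemma \ref{ordering} guarantees. The proposition then yields that $S$ is Noetherian with $\widehat{S} = \widehat{B}$, giving (2), and that $S/(S \cap M)$ is uncountable, giving (3). Together with $(S, S \cap M)$ being quasi-local by construction, Noetherianity upgrades $S$ to a local ring with maximal ideal $S \cap M$, completing the proof.

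There is no real obstacle remaining: the only subtlety worth double-checking is that $R_0$ genuinely satisfies the OMG-subring definition, in particular the annihilator clause, which is immediate because $R_0$ is a field intersected with prime ideals in the ambient ring. Everything else is black-box application of Lemma \ref{ordering} and Proposition \ref{completionsame}.
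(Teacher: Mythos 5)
Your proof is correct and follows essentially the same route as the paper: start from $\mathbb{Q}$ as the base OMG-subring, feed it into Lemma \ref{ordering}, and then invoke Proposition \ref{completionsame} to obtain the completion and uncountability claims. The only difference is that you spell out the (straightforward) verification that $\mathbb{Q}$ is an OMG-subring, which the paper leaves to the reader.
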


\begin{proof}
Note that $R=\Q$ is an OMG-subring of $B$. Now apply Lemma \ref{ordering}, taking $R$ as the starting OMG-subring, to obtain a subring $(S, S \cap M)$ of $B$ such that $R \subseteq S$, $S \cap Q_i = S \cap Q_j$ if and only if $i=j$ or $i, j \in \{1,2\}$, the map $S \longrightarrow B/M^2$ is onto and $IB \cap S = I$ for every finitely generated ideal $I$ of $S$.
By Proposition \ref{completionsame}, $\widehat{S}=\widehat{B}$ and $S/(S \cap M)$ is uncountable. Hence $S$ is the desired subring of $B$.
\end{proof}

\section{Applications and Consequences}\label{Applications}

The Generalized Gluing Theorem (Theorem \ref{bigassthm}) shows that exactly two associated prime ideals of $B$ can be ``glued together'' given that they satisfy the conditions provided in property \ref{*}. Notice that, given an appropriate local ring $B$, if  $B$ has exactly two associated prime ideals, they will necessarily satisfy \ref{*}. We now consider the special case where the local ring $B$ in Theorem \ref{bigassthm} has exactly two associated prime ideals and we show that, in this case, the resulting subring $S$ obtained in Theorem \ref{bigassthm} is a domain. We do this by showing that, using our construction, the ring $S$ in Theorem \ref{bigassthm} satisfies the condition that $S \cap Q_1 = S \cap Q_2 = (0)$. We start with the following crutial observation.

\begin{lemma}\label{transcend-domain}
Let $(B, M)$ be a local ring and let $Q$ be a prime ideal of $B$. Let $(R,R \cap M)$ be a quasi-local subring of $B$ such that $R \cap Q=(0)$. If $x \in B$ such that $x \in B/Q$ is transcendental over $R/(Q \cap R) \cong R$ then $R[x]_{(R[x]\cap M)} \cap Q = (0)$.
\end{lemma}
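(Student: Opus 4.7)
The plan is to reduce the claim to showing $R[x] \cap Q = (0)$, and then transfer the conclusion to the localization via an embedding argument. The first preparatory observation is that, since $B$ is local with maximal ideal $M$, every element of $R[x] \setminus (R[x] \cap M)$ is a unit in $B$. By the universal property of localization, $R[x]_{(R[x]\cap M)}$ therefore sits naturally inside $B$ as a subring, which makes the intersection with $Q$ meaningful in the first place.

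The core step is the verification that $R[x] \cap Q = (0)$, and it parallels the coefficient-vanishing argument used inside the proof of Lemma \ref{transcend}. Given $f = r_m x^m + r_{m-1} x^{m-1} + \cdots + r_0 \in R[x] \cap Q$ with $r_i \in R$, I would reduce modulo $Q$ to obtain the relation
\[
\bar{r}_m \bar{x}^m + \bar{r}_{m-1}\bar{x}^{m-1} + \cdots + \bar{r}_0 = 0
\]
in $B/Q$. The hypothesis $R \cap Q = (0)$ gives the identification $R/(R \cap Q) \cong R$, and the transcendence of $\bar{x}$ over this copy of $R$ forces each $\bar{r}_i$ to vanish, i.e. $r_i \in R \cap Q = (0)$. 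Hence every coefficient is zero and $f = 0$.

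To promote this to the localization, I would take an arbitrary $g \in R[x]_{(R[x]\cap M)} \cap Q$ and write $g = f s^{-1}$ with $f \in R[x]$ and $s \in R[x] \setminus (R[x]\cap M)$. Using the embedding from the first paragraph, $s$ is a genuine unit of $B$, so $f = gs$ lies in $Q$ (since $Q$ is an ideal of $B$), and therefore $f \in R[x] \cap Q = (0)$. This gives $f = 0$ and hence $g = 0$, completing the proof.

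I do not expect any real obstacle in this argument; it is essentially a combination of the polynomial coefficient vanishing used in Lemma \ref{transcend} with a standard localization manipulation. The single subtlety worth flagging is the justification that $R[x]_{(R[x]\cap M)}$ can be viewed inside $B$, which is what allows the expression $gs \in Q$ to make sense and to inherit the vanishing from the polynomial ring.
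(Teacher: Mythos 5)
Your proposal is correct and takes essentially the same approach as the paper: the key step is the coefficient-vanishing argument from the transcendence of $x + Q$ over $R/(Q \cap R) \cong R$, giving $R[x] \cap Q = (0)$. The paper dispatches the passage to the localization with a brief ``and it follows that,'' whereas you spell out the $g = fs^{-1}$ manipulation explicitly; this is a matter of exposition, not substance.
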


\begin{proof}
Let $f \in R[x] \cap Q$. Then for some positive integer $m$, 
\[f = r_0 + r_1x + r_2x^2 + \dots + r_mx^m, \text{ with } r_i \in R \text{ for } 0 \leq i \leq m.\]
Since $f \in Q$ and $u + Q$ is transcendental over $R/(Q \cap R),$ $r_i \in Q$ for all $i \in \{0, \dots, m\}$. This means $r_i \in R \cap Q = (0)$ and it follows that $f = 0$.  Therefore, $R[x] \cap Q = (0)$, and it follows that $R[x]_{(R[x]\cap M)} \cap Q = (0)$.
\end{proof}

\begin{remark}\label{zeroint}
A consequence of Lemma \ref{transcend-domain} is that, if the OMG-subring $R$ of $B$ in Lemma \ref{transcend} satisfies $R \cap Q_1 = R \cap Q_2 = (0)$, then the resulting OMG-subring $S$ of $B$ in Lemma \ref{transcend} satisfies the condition that $S \cap Q_1 = S \cap Q_2 = (0)$.  Since Lemma \ref{transcend} is used in the proofs of Lemma \ref{adjoining} and Lemma \ref{cosetrep}, the analogous statement follows for those lemmas as well.  In other words, if the OMG-subring $R$ of $B$ in Lemma \ref{adjoining} (resp. Lemma \ref{cosetrep}) satisfies $R \cap Q_1 = R \cap Q_2 = (0)$, then the resulting OMG-subring $S$ of $B$ in Lemma \ref{adjoining} (resp. Lemma \ref{cosetrep}) satisfies the condition that $S \cap Q_1 = S \cap Q_2 = (0)$.
\end{remark}

\begin{lemma}\label{closer-domain}
Let $(B, M)$ be a local ring with $B/M$ uncountable. Suppose that $\Ass(B)= \{Q_1, Q_2\}$. Let $(R, R \cap M)$ be an OMG-subring of $B$ such that $R \cap Q_1 = R \cap Q_2 = (0)$. Then, for any finitely generated ideal $I$ of $R$ and for any $c \in IB \cap R$, there is an OMG-subring $(S, S \cap M)$ of $B$ such that $R \subseteq S, |S| = |R|$, $c \in IS$, and $S \cap Q_1 = S \cap Q_2 = (0)$.
\end{lemma}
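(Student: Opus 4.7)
The plan is to imitate the proof of Lemma \ref{closer} verbatim in its overall structure (induction on the number $k$ of generators of $I$), but to use the strengthened hypothesis $\Ass(B)=\{Q_1,Q_2\}$ to upgrade every occurrence of ``lies in $Q_2$'' to ``equals $0$''. The reason this upgrade is available is that when $\Ass(B)=\{Q_1,Q_2\}$ the zerodivisor set of $B$ is exactly $Q_1\cup Q_2$, so any element of $B$ outside $Q_1\cup Q_2$ is a non-zerodivisor. To propagate the new property $S\cap Q_1=S\cap Q_2=(0)$ through the construction, I will invoke Remark \ref{zeroint}, which already tells us that the OMG-subring produced by Lemma \ref{adjoining} inherits zero-intersection from its input.

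For the base case $k=1$, write $I=aR$ and $c=au$ with $u\in B$. If $a=0$ take $S=R$. Otherwise $a$ is a nonzero element of $R$, so $R\cap Q_i=(0)$ forces $a\notin Q_1\cup Q_2$, and hence $a$ is a non-zerodivisor in $B$. Set $S=R[u]_{(R[u]\cap M)}$. The OMG-subring argument of Lemma \ref{closer} (in its ``$a\notin Q_1$'' sub-case) applies unchanged. For the zero-intersection conclusion, take $f=r_mu^m+\cdots+r_0\in R[u]\cap Q_1$. Then
\[a^mf \;=\; r_mc^m + r_{m-1}c^{m-1}a + \cdots + r_0a^m \;\in\; R\cap Q_1 \;=\; (0),\]
so $a^mf=0$ in $B$, and since $a$ is a non-zerodivisor we get $f=0$. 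Thus $R[u]\cap Q_1=(0)$, and the symmetric argument gives $R[u]\cap Q_2=(0)$. The ``$a\in Q_1$'' sub-case of Lemma \ref{closer} never arises here, since $a\in R\cap Q_1=(0)$ would force $a=0$.

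For the inductive step $k>1$, write $c=y_1b_1+\cdots+y_kb_k$. If every $y_i$ lies in $Q_1$, then each $y_i\in R\cap Q_1=(0)$, so $I=(0)$, $c=0$, and $S=R$ works. Otherwise, after reindexing, $y_2\notin Q_1$, hence $y_2\notin Q_2$. Apply Lemma \ref{adjoining} with $J=B$, $z=y_2$, $b=b_1$ to obtain $w\in B$ with $S'=R[b_1+y_2w]_{(R[b_1+y_2w]\cap M)}$ an OMG-subring satisfying $|S'|=|R|$; by Remark \ref{zeroint} we also have $S'\cap Q_1=S'\cap Q_2=(0)$. Putting $c^{*}=c-y_1(b_1+y_2w)\in(y_2,\ldots,y_k)B\cap S'$ and invoking the inductive hypothesis on $S'$ and the $(k-1)$-generated ideal $(y_2,\ldots,y_k)$ produces an OMG-subring $S\supseteq S'$ with $|S|=|S'|$, with $c^{*}\in(y_2,\ldots,y_k)S$, and with $S\cap Q_1=S\cap Q_2=(0)$. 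Then $c=c^{*}+y_1(b_1+y_2w)\in IS$, as required.

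The main conceptual obstacle is the implication ``$a^mf=0\Rightarrow f=0$'' in the base case. It is here, and only here, that the hypothesis is strengthened from $n\geq2$ (as in Lemma \ref{closer}) to $n=2$: the full force of $\Ass(B)=\{Q_1,Q_2\}$ is needed to guarantee that an element outside both $Q_1$ and $Q_2$ is a non-zerodivisor. Everywhere else the adaptation is routine, amounting to applying the already-proven mechanisms (Lemma \ref{adjoining} and Remark \ref{zeroint}) and observing that membership in $R\cap Q_i$ now means being zero rather than lying in the other prime.
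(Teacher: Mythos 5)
Your proof is correct and follows essentially the same approach as the paper's: invoke Lemma \ref{closer}, preserve the zero-intersection property through the applications of Lemma \ref{adjoining} via Remark \ref{zeroint}, and handle the one step that does not go through Lemma \ref{adjoining} (namely $S = R[u]_{(R[u]\cap M)}$ in the base case) directly by observing that $a$ is a non-zerodivisor in $B$ because $\Ass(B)=\{Q_1,Q_2\}$ and $a\notin Q_1\cup Q_2$, so $a^m f = 0$ forces $f=0$. You present the induction more explicitly than the paper (which handles it somewhat informally by reference to the proof of Lemma \ref{closer}), and you additionally note that the sub-cases ``$a\in Q_1$'' and ``all $y_i\in Q_1$'' collapse to the trivial zero case under the hypothesis $R\cap Q_1=(0)$; this is a minor streamlining, not a different route.
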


\begin{proof}
  By Lemma \ref{closer} there is an OMG-subring $(S, S \cap M)$ of $B$ such that $R \subseteq S, |S| = |R|$, and $c \in IS$. We have left to show, then, that the $S$ constructed in the proof of Lemma \ref{closer} satisfies the property that $S \cap Q_1 = S \cap Q_2 = (0)$. As most of the cases in the proof of Lemma \ref{closer} use Lemma \ref{adjoining}, we use Remark \ref{zeroint} to conclude that, for those cases, $S \cap Q_1 = S \cap Q_2 = (0)$.
  

  The only case left to consider is when $I$ is generated by a nonzero element $a$ of $R$ where $a \not\in Q_1$. In this case we also have that $a \not\in Q_2$.  Note that, since $Q_1$ and $Q_2$ are the only associated prime ideals of $B$, we have that $a$ is not a zerodivisor in $B$.  We claim that ring given in the proof of Lemma \ref{closer}, $S = R[u]_{(R[u] \cap M)}$ where $c = au$, $u \in B$ is the desired OMG-subring of $B$. We only need show that $S \cap Q_1 = S \cap Q_2 = (0)$. Suppose $f \in R[u] \cap Q_1$. Then $f = r_0 + r_1u + \dots + r_mu^m$ where $r_i \in R$. Hence $a^mf = r_0a^m + r_1a^{m-1} + \dots + r_{m-1}c^{m-1}a + r_mc^m \in R \cap Q_1 = (0)$. As $a$ is not a zerodivisor, $f = 0$ and we have that $R[u] \cap Q_1 = (0)$. Similarly, $R[u] \cap Q_2 = (0)$.  It follows that $S \cap Q_1 = S \cap Q_2 = (0)$.
  %
 %
\end{proof}

\begin{lemma}\label{unioning-domain}
Let $(B, M)$ be a local ring, let $Q_1$ and $Q_2$ be prime ideals of $B$, and let $\Omega$ be a well-ordered index set. Suppose that $(R_\beta, R_\beta \cap M)$ for $\beta \in \Omega$ is a family of subrings of $B$ with $R_\beta \cap Q_1 = R_\beta \cap Q_2 = (0)$ and such that, if $\alpha, \mu \in \Omega$ with $\alpha < \mu,$ then $R_\alpha \subseteq R_{\mu}$. Define $S=\bigcup_{\beta \in \Omega}R_\beta$. Then $S \cap Q_1 = S \cap Q_2 = (0)$.
\end{lemma}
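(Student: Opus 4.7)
The plan is to carry out the obvious element-chase: take an arbitrary $s \in S \cap Q_1$ and show $s = 0$, then repeat the argument for $Q_2$. Since $S$ is defined as the union $\bigcup_{\beta \in \Omega} R_\beta$, any element $s \in S$ must lie in $R_\beta$ for some $\beta \in \Omega$. If additionally $s \in Q_1$, then $s \in R_\beta \cap Q_1$, and by hypothesis this intersection is $(0)$, forcing $s = 0$. This gives $S \cap Q_1 \subseteq (0)$, and the reverse containment is automatic since $0 \in S \cap Q_1$.

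The identical argument (swapping $Q_1$ for $Q_2$) yields $S \cap Q_2 = (0)$, and this completes the proof. Note that we do not need the chain condition (if $\alpha < \mu$ then $R_\alpha \subseteq R_\mu$) or the well-ordering of $\Omega$ for this particular statement; they are presumably stated here only because this lemma will be invoked in tandem with Lemma \ref{unioning}, where those hypotheses are essential, and so retaining them keeps the setup uniform.

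There is no genuine obstacle: the lemma is a direct consequence of the set-theoretic fact that membership in a union of sets is witnessed by membership in one of them, combined with the pointwise hypothesis $R_\beta \cap Q_i = (0)$. The content of the lemma lies not in its proof but in its role as a bookkeeping tool: it allows us to transfer the ``zero intersection with $Q_1$ and $Q_2$'' property through the transfinite union constructions that appear later in the domain version of Lemma \ref{ordering}, analogously to how Lemma \ref{unioning} transfers the OMG-subring property through such unions.
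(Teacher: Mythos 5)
Your proof is correct and is essentially identical to the paper's: take $s \in S \cap Q_1$, locate it in some $R_\beta$, invoke $R_\beta \cap Q_1 = (0)$, and repeat for $Q_2$. Your side remark that the chain condition and well-ordering are not actually used here is also accurate.
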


\begin{proof}
Let $x \in S \cap Q_1$.  Then $x \in R_{\beta}$ for some $\beta \in \Omega$.  It follows that $x \in R_{\beta} \cap Q_1 = (0)$.  Hence $S \cap Q_1 = (0)$.  Similarly, $S \cap Q_2 = (0)$.
\end{proof}

\begin{remark}\label{unioningzero}
Note that, by Lemma \ref{unioning-domain}, if the Double OMG-subrings $R_{\beta}$ of $B$ in the statement of Lemma \ref{unioning} all satisfy the condition that $R_{\beta} \cap Q_1 = R_{\beta} \cap Q_2 = (0)$, then the ring $S$ in the conclusion of Lemma \ref{unioning} satisfies the condition that $S \cap Q_1 = S \cap Q_2 = (0)$. As a result of this and Lemma \ref{closer-domain}, if the ring $R$ in the statement of Lemma \ref{preordering} is a Double OMG-subring satisfying the condition that $R \cap Q_1 = R \cap Q_2 = (0)$, then, omitting the use of Lemma \ref{MYLEMMA!} in the proof of Lemma \ref{preordering}, the ring $S$ in the conclusion of Lemma \ref{preordering} also satisfies $S \cap Q_1 = S \cap Q_2 = (0)$. Similarly, if the ring $R$ in the statement of Lemma \ref{ordering} is a Double OMG-subring such that $R \cap Q_1 = R \cap Q_2 = (0)$, then the ring $S$ in the conclusion of Lemma \ref{ordering} satisfies the condition that $S \cap Q_1 = S \cap Q_2 = (0)$.
\end{remark}

We now show that, if $B$ has exactly two associated prime ideals, then the ring $S$ constructed in the proof of Theorem \ref{bigassthm} is a domain.

\begin{theorem}\label{domain}
Let $(B, M)$ be a local ring containing the rationals and assume that $B/M$ is uncountable. Suppose that $\Ass(B) = \{Q_1, Q_2\}$ and that $M \not\in \Ass(B)$. Suppose also that $\height(Q_i) \leq 1$ for all $i \in \{1, 2\}.$ Then there is a local ring $S \subseteq B$ with maximal ideal $S \cap M$ such that
\begin{enumerate}
    \item $S$ contains the rationals
    \item $\widehat{S}=\widehat{B}$ 
    \item $S/(S \cap M)$ is uncountable 
    \item $S \cap Q_1 = S \cap Q_2 = (0)$
\end{enumerate}
In particular, $S$ is a domain.
\end{theorem}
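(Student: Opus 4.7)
The plan is to rerun the proof of Theorem \ref{bigassthm} starting from the OMG-subring $R=\Q$ and carrying the invariant $R\cap Q_1 = R\cap Q_2 = (0)$ throughout, using the zero-intersection variants of the main lemmas (Remark \ref{zeroint}, Lemma \ref{closer-domain}, Lemma \ref{unioning-domain}, and Remark \ref{unioningzero}) to ensure the invariant survives every construction step.

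First I would verify the hypotheses of Theorem \ref{bigassthm}: since $\Ass(B)=\{Q_1,Q_2\}$, every minimal prime of $B$ is one of the $Q_i$, so property \ref{*} holds automatically --- either both $Q_i$ have height $0$ (case (1)), or one has height $1$ and the other is the unique minimal prime of $B$, hence contained in it (case (2)). Then I would check that $R=\Q$ is an OMG-subring of $B$ with $R\cap Q_1=R\cap Q_2=(0)$: every nonzero rational is a unit in $B$, so it cannot lie in any $Q_i$, the intersection conditions hold trivially, and the annihilator condition is vacuous. Because $n=2$, the separation clause in Definition \ref{2xomg} is automatic, so $R=\Q$ is in fact already a Double OMG-subring.

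Next, I would apply Lemma \ref{ordering} to $R=\Q$ to produce a quasi-local subring $S\subseteq B$ satisfying $\Q\subseteq S$, the surjectivity of $S\to B/M^2$, $IB\cap S=I$ for every finitely generated ideal $I$ of $S$, and $S\cap Q_i=S\cap Q_j$ iff $i=j$ or $\{i,j\}=\{1,2\}$. The key observation is that, per Remark \ref{unioningzero}, the recursive construction inside Lemma \ref{ordering} preserves the zero-intersection property: since $R$ is already a Double OMG-subring, the invocation of Lemma \ref{MYLEMMA!} inside Lemma \ref{preordering} can be omitted (its own proof returns $S=R$ when $n=2$), and the remaining building blocks --- Lemma \ref{cosetrep} and Lemma \ref{adjoining} via Remark \ref{zeroint}, Lemma \ref{closer-domain} in place of Lemma \ref{closer}, and Lemma \ref{unioning-domain} in place of Lemma \ref{unioning} --- each propagate $R\cap Q_1=R\cap Q_2=(0)$ from input to output. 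By transfinite induction along the well-ordered index set $B/M^2$, the invariant survives, yielding $S\cap Q_1=S\cap Q_2=(0)$.

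Applying Proposition \ref{completionsame} to $S$ then yields $\widehat{S}=\widehat{B}$, the uncountability of $S/(S\cap M)$, and the Noetherianity of $S$, so items (1)--(3) follow. For the final claim that $S$ is a domain, the set of zerodivisors of $B$ equals $Q_1\cup Q_2$; any nonzero zerodivisor of $S$ would be a zerodivisor of $B$ and hence lie in $(S\cap Q_1)\cup(S\cap Q_2)=(0)$, a contradiction. The only real subtlety is the bookkeeping that the zero-intersection property persists through each adjunction occurring in Lemma \ref{cosetrep}, Lemma \ref{adjoining}, and Lemma \ref{closer-domain}; this is controlled in each case by Lemma \ref{transcend-domain}, which shows that adjoining an element transcendental modulo each $Q_i$ to a subring with trivial intersection produces a subring with trivial intersection.
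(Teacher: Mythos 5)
Your proposal is correct and follows exactly the route the paper takes: observe that $\Q$ is already a Double OMG-subring with zero intersection with $Q_1$ and $Q_2$, rerun the construction of Theorem \ref{bigassthm} via Lemma \ref{ordering} while tracking the zero-intersection invariant through Remark \ref{zeroint}, Lemma \ref{closer-domain}, Lemma \ref{unioning-domain}, and Remark \ref{unioningzero}, and finish with Proposition \ref{completionsame}. The paper's own proof is just a terser statement of this same argument, so no substantive difference.
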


\begin{proof}
In this case, $R = \mathbb{Q}$ is a Double OMG-subring of $B$ with $R \cap Q_1 = R \cap Q_2 = (0)$. By the proof of Theorem \ref{bigassthm} using Lemma \ref{ordering} along with Remark \ref{unioningzero}, the desired ring $S$ exists.
\end{proof}

Of course, Theorem \ref{bigassthm} can also be applied to rings with more than two associated prime ideals. Therefore, Theorem \ref{bigassthm} is a stronger result if it can be iteratively applied. 
In order to show that this is possible, we prove that, in Theorem \ref{bigassthm}, the set of associated prime ideals of $S$ is exactly the set of associated prime ideals of $B$ where $Q_1$ and $Q_2$ have been ``glued'' together. We state this formally after first stating three useful lemmas.

\begin{lemma}[\cite{small2019}, Lemma 2.6]\label{small 2019}
    Let $(T, \widetilde{M})$ be the completion of a local ring $(B, B \cap \widetilde{M})$ and let $P$ be a prime ideal of $B$. If $Q$ is a minimal prime ideal over $PT$ then $Q \cap B = P$.
\end{lemma}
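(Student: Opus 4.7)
The plan is to exploit the fact that $T=\widehat{B}$ is faithfully flat over $B$, which implies that the induced map $\Spec(T)\to\Spec(B)$ satisfies the going-down property. Set $P^\ast := Q\cap B$. Since $PT\subseteq Q$, contracting gives $P\subseteq P^\ast$, so the whole problem reduces to showing $P^\ast = P$.

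To do this, I would argue by contradiction: suppose $P\subsetneq P^\ast$. Applying going-down for the faithfully flat map $B\to T$ to the chain $P\subsetneq P^\ast$ in $B$ and to the prime $Q$ of $T$ that lies over $P^\ast$, one obtains a prime $Q'$ of $T$ with $Q'\subseteq Q$ and $Q'\cap B = P$. Since $Q'\cap B = P \neq P^\ast = Q\cap B$, the primes $Q'$ and $Q$ cannot coincide, so in fact $Q'\subsetneq Q$. Moreover, from $P\subseteq Q'$ we obtain $PT\subseteq Q'T = Q'$. Thus $Q'$ is a prime of $T$ strictly contained in $Q$ and still containing $PT$, contradicting the hypothesis that $Q$ is a minimal prime over $PT$. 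Therefore $P^\ast = P$, which is the desired conclusion.

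The main obstacle is essentially nothing: the argument is a direct invocation of going-down applied to a faithfully flat extension. The only minor subtlety worth flagging is the strictness $Q'\subsetneq Q$ at the key step, which is forced automatically by the contradiction hypothesis together with $Q'\cap B = P \neq Q\cap B$. Notice that no specific feature of completion is used beyond the standard fact that $B\to\widehat{B}$ is faithfully flat for local Noetherian $B$; the same proof establishes the analogous statement for any faithfully flat extension.
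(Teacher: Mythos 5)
Your argument is correct. The paper does not include a proof of this lemma; it cites it from \cite{small2019} (Lemma 2.6), so there is no in-paper argument to compare against. Your proof via going-down for the (faithfully) flat map $B \to \widehat{B}$ is the standard argument one would expect: from $PT \subseteq Q$ you get $P \subseteq Q \cap B =: P^\ast$, and if the containment were strict, going-down would produce a prime $Q' \subsetneq Q$ of $T$ lying over $P$, hence containing $PT$, contradicting minimality of $Q$ over $PT$. The only cosmetic remark is that going-down already follows from flatness alone (Matsumura, Thm.~9.5); faithful flatness is not needed for this particular step, though of course it holds for $B \to \widehat{B}$.
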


\begin{lemma}\label{going down bs}
Let $(B,M)$ be a local ring and suppose $(S,S \cap M)$ is a local subring of $B$ such that $\widehat{S} = \widehat{B}$. Then the going down property holds between $S$ and $B$.
\end{lemma}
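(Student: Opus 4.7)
The plan is to lift the problem to the common completion $T = \widehat{S} = \widehat{B}$, use that the completion map is flat (so going down holds automatically between $S$ and $T$), and then descend back to $B$ using the previously stated Lemma \ref{small 2019}. Concretely, going down between $S$ and $B$ asks: given prime ideals $P_1 \subseteq P_2$ of $S$ and a prime $Q_2$ of $B$ with $Q_2 \cap S = P_2$, produce a prime $Q_1$ of $B$ with $Q_1 \subseteq Q_2$ and $Q_1 \cap S = P_1$.

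The key steps I would execute in order: First, fix such $P_1 \subseteq P_2$ and $Q_2$, and set $T = \widehat{S} = \widehat{B}$. Second, choose a minimal prime $\widetilde{Q}_2$ of $Q_2 T$ in $T$; by Lemma \ref{small 2019} applied to the inclusion $B \hookrightarrow T$, we have $\widetilde{Q}_2 \cap B = Q_2$, and hence $\widetilde{Q}_2 \cap S = Q_2 \cap S = P_2$. Third, invoke the standard fact that the completion map $S \hookrightarrow T$ is faithfully flat (since $S$ is local Noetherian), which implies that going down holds for $S \to T$; applying going down to the chain $P_1 \subseteq P_2$ of $S$ and the prime $\widetilde{Q}_2$ of $T$ contracting to $P_2$, we obtain a prime $\widetilde{Q}_1$ of $T$ with $\widetilde{Q}_1 \subseteq \widetilde{Q}_2$ and $\widetilde{Q}_1 \cap S = P_1$. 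Fourth, set $Q_1 = \widetilde{Q}_1 \cap B$. Then $Q_1$ is a prime of $B$ with $Q_1 \cap S = \widetilde{Q}_1 \cap S = P_1$, and $Q_1 = \widetilde{Q}_1 \cap B \subseteq \widetilde{Q}_2 \cap B = Q_2$, as required.

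Nothing here looks especially hard, because all the heavy lifting is packaged into two external inputs: flatness of completion (which gives going down for $S \to T$) and Lemma \ref{small 2019} (which controls the contraction of minimal primes over $Q_2T$ back to $Q_2$). The one point that requires minor care is that the final $Q_1$ really lies inside $Q_2$, and this is exactly where Lemma \ref{small 2019} is essential: without knowing $\widetilde{Q}_2 \cap B = Q_2$, one could only conclude $Q_1 \subseteq \widetilde{Q}_2 \cap B$, which might \emph{a priori} be larger than $Q_2$. So the main conceptual obstacle, such as it is, is recognizing that the chosen $\widetilde{Q}_2$ must be a \emph{minimal} prime over $Q_2 T$ rather than an arbitrary prime of $T$ contracting to $Q_2$ in $B$.
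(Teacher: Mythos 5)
Your proposal is correct and follows essentially the same path as the paper's own proof: lift the given prime of $B$ to a minimal prime of $T$ over its extension (so Lemma~\ref{small 2019} guarantees the contraction back to $B$ is the original prime), apply going down for the faithfully flat map $S \to T$, and contract the resulting prime of $T$ back to $B$. Only the notation differs, so there is nothing to add.
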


\begin{proof}
  Let $\widehat{S} = \widehat{B} = T$ and suppose $Q_1 \supseteq Q_2$ are prime ideals of $S$. Let $P_1 \in \Spec(B)$ such that $P_1 \cap S = Q_1$. Let $J_1$ be a minimal prime ideal in $T$ over $P_1T$. Then $J_1 \cap B = P_1$ by Lemma \ref{small 2019}. Now $J_1 \cap S = J_1 \cap B \cap S = P_1 \cap S = Q_1.$ Because $T$ is a faithfully flat extension of $S$, the going down property holds and so there exists a prime ideal $J_2$ of $T$ such that $J_1 \supseteq J_2$ and $J_2 \cap S = Q_2$.
 %
Let $P_2 = J_2 \cap B$. Then $P_1 \supseteq P_2$ and $P_2 \cap S = Q_2$.  It follows that the going down property holds between $S$ and $B$.
 %
\end{proof} 

\begin{lemma}\label{asssheight}
 Let $(B,M)$ be a local ring and suppose $(S,S \cap M)$ is a local subring of $B$ such that $\widehat{S} = \widehat{B}$.  Suppose also that, if $Q \in \Ass(B)$, then $\height(Q) \leq 1$.  Then if $P \in \Ass(S)$, we have $\height(P) \leq 1$.
\end{lemma}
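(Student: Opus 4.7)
The plan is to locate, for each $P \in \Ass(S)$, an associated prime $Q$ of $B$ that lies over $P$ (meaning $Q \cap S = P$); once such a $Q$ is in hand, Lemma \ref{going down bs} will force $\height(P) \leq \height(Q) \leq 1$.

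First, write $P = \ann_S(x)$ for some $x \in S \subseteq B$, and pass to the $B$-ideal $I = \ann_B(x)$. Since $x \in S$, we have $I \cap S = \ann_S(x) = P$, and therefore $\sqrt{I} \cap S = \sqrt{P} = P$. Because $B$ is Noetherian, $I$ has only finitely many minimal primes $Q_1, \ldots, Q_k$ in $B$, with $\sqrt{I} = Q_1 \cap \cdots \cap Q_k$. The isomorphism $B/I \cong Bx$ realizes $B/I$ as a $B$-submodule of $B$, so $\Ass_B(B/I) \subseteq \Ass(B)$; in particular every minimal prime $Q_i$ of $I$ lies in $\Ass(B)$, and the hypothesis gives $\height(Q_i) \leq 1$. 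Intersecting with $S$ yields $P = (Q_1 \cap S) \cap \cdots \cap (Q_k \cap S)$, with each $Q_i \cap S$ prime and containing $P$. A standard prime-avoidance argument then applies: the product $\prod_i (Q_i \cap S)$ sits inside the intersection $P$, so primality of $P$ gives $Q_i \cap S \subseteq P$ for some $i$, and the reverse inclusion forces $Q_i \cap S = P$. Fix such an index and set $Q := Q_i$.

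To finish, suppose for contradiction that $\height(P) \geq 2$ and choose a strict chain $P \supsetneq P_1 \supsetneq P_0$ in $\Spec(S)$. Applying Lemma \ref{going down bs} to $Q$ lying over $P$, one obtains primes $Q \supsetneq Q^{(1)} \supsetneq Q^{(0)}$ in $\Spec(B)$ with $Q^{(j)} \cap S = P_j$ for $j=0,1$; the strictness of the lifted chain is automatic since the $P_j$ are distinct. This shows $\height(Q) \geq 2$, contradicting $\height(Q) \leq 1$. The conceptual crux is the observation that annihilator ideals of single elements have all of their minimal primes in $\Ass(B)$; this converts the problem of finding $Q \in \Ass(B)$ with $Q \cap S = P$ into a short prime-avoidance argument, after which the already-proved going-down lemma does the remaining work.
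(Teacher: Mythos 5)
Your proof is correct, but it finds the relevant associated prime of $B$ by a different route than the paper does. The paper's proof is shorter: it observes that, writing $P = \ann_S(x)$, every element of $PB$ annihilates the nonzero element $x \in B$, so $PB$ consists of zerodivisors of $B$; since the zerodivisors of $B$ are the union of the finitely many associated primes of $B$, prime avoidance gives $PB \subseteq Q$ for some $Q \in \Ass(B)$, and then the going-down lemma yields $\height(P) \leq \height(Q \cap S) \leq \height(Q) \leq 1$. You instead pass to $I = \ann_B(x)$, note that $B/I \cong Bx \subseteq B$ forces every minimal prime of $I$ to lie in $\Ass(B)$, intersect with $S$, and run a ``prime contains a product, so contains a factor'' argument to produce $Q \in \Ass(B)$ with the sharper property that $Q \cap S = P$ (not merely $Q \cap S \supseteq P$). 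That extra precision is not needed for the height bound, and it makes your argument somewhat longer, but it is a genuine strengthening --- in fact essentially half of what the paper later proves in Proposition \ref{asssprop}. Your final step, deriving $\height(P) \leq 1$ by lifting a hypothetical chain of length two under $P$ via Lemma \ref{going down bs}, is just an unwound version of the paper's direct appeal to the height inequality that going-down gives; both are fine.
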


\begin{proof}
Suppose $P \in \Ass(S)$. Then $PB$ contains only zero divisors of $B$. Thus, $PB \subseteq Q$ for some $Q \in \Ass(B)$. Now, $\height Q \leq 1$ and so, by Lemma \ref{going down bs}, $\height (S \cap Q) \leq 1$. Therefore, $P \subseteq PB \cap S \subseteq Q \cap S$, and it follows that $\height P \leq 1$.
\end{proof}

We now state and prove the previously mentioned claim about the relationship between the associated prime ideals of $S$ and the associated prime ideals of $B$. 

\begin{proposition}\label{asssprop}
    Let $(B,M)$ be a local ring and suppose $(S,S \cap M)$ is a local subring of $B$ such that $\widehat{S} = \widehat{B}$.  Suppose also that, if $Q \in \Ass(B)$, then $\height(Q) \leq 1$. Then $\Ass(S)= \{Q \cap S \,| \,Q \in \Ass(B)\}.$
\end{proposition}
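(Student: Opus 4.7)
The plan is to pass to the common completion $T = \widehat S = \widehat B$ and use the standard fact that for any Noetherian local ring $R$ the contraction map $\Ass(\widehat R) \to \Ass(R)$ is surjective; equivalently,
\[ \Ass(R) = \{\widetilde P \cap R : \widetilde P \in \Ass(\widehat R)\}. \]
This follows from the faithful flatness of $R \hookrightarrow \widehat R$: for $P \in \Ass(R)$, the injection $R/P \hookrightarrow R$ base-changes to $\widehat R/P\widehat R \hookrightarrow \widehat R$, and any $\widetilde P \in \Ass(\widehat R/P\widehat R) \subseteq \Ass(\widehat R)$ contracts to $P$ (because $\widehat R/P\widehat R$ is faithfully flat over the domain $R/P$, so no nonzero element of $R/P$ is a zerodivisor on $\widehat R/P\widehat R$); the reverse direction is the analogous flat-base-change argument, showing that every $\widetilde P \in \Ass(\widehat R)$ has $\widetilde P \cap R \in \Ass(R)$.

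Granting this identity, apply it with $R = S$ and with $R = B$ to obtain
\[ \Ass(S) = \{\widetilde P \cap S : \widetilde P \in \Ass(T)\}, \qquad \Ass(B) = \{\widetilde P \cap B : \widetilde P \in \Ass(T)\}. \]
Because $S \subseteq B \subseteq T$, for every $\widetilde P \in \Ass(T)$ we have $(\widetilde P \cap B) \cap S = \widetilde P \cap S$, whence
\[ \{Q \cap S : Q \in \Ass(B)\} = \{\widetilde P \cap S : \widetilde P \in \Ass(T)\} = \Ass(S), \]
giving the proposition. The hypothesis $\height(Q) \leq 1$ plays no direct role in this clean version of the argument; it is inherited from the surrounding lemmas, particularly Lemma \ref{asssheight}.

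The main obstacle is precisely the faithfully flat transfer of associated primes. If one would rather lean only on the paper's own lemmas, the inclusion $\Ass(S) \subseteq \{Q \cap S : Q \in \Ass(B)\}$ can be handled by hand: given $P \in \Ass(S)$, Lemma \ref{asssheight} gives $\height(P) \leq 1$; since $P$ consists of zerodivisors of $B$, prime avoidance places $P$ inside some $Q \in \Ass(B)$; and Lemma \ref{going down bs} together with $\height(Q) \leq 1$ forces either $P = Q \cap S$ outright or, in the case $P \subsetneq Q \cap S$, the existence by going down along this chain of a minimal prime $Q' \subsetneq Q$ of $B$ with $Q' \cap S = P$, which automatically lies in $\Ass(B)$. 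The reverse inclusion is easy for minimal $Q \in \Ass(B)$---going down makes $Q \cap S$ minimal in $S$---but the embedded case $\height(Q) = 1$ seems to genuinely require the faithfully flat machinery, since writing $Q = \ann_B(b)$ yields $Q \cap S = \ann_S(b)$ only for an element $b \in B$, and one must produce a genuine element of $S$ with that same $S$-annihilator.
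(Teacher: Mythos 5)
Your argument is correct and takes a genuinely different, more efficient route than the paper's. Both proofs pass to the common completion $T = \widehat{S} = \widehat{B}$, but the paper works prime-by-prime: given $P \in \Ass(S)$ it uses Lemma \ref{small 2019} to produce a minimal prime $J$ over $PT$ lying inside an associated prime of $T$, sets $Q = J \cap B$, and then needs the hypothesis $\height(Q') \leq 1$ for $Q' \in \Ass(B)$ to promote the zerodivisor prime $Q$ to an actual member of $\Ass(B)$; the reverse inclusion is symmetric and leans on Lemma \ref{asssheight}, which again depends on the height bound. You instead invoke, once, the standard faithfully flat fact (essentially Matsumura, Theorem 23.2, applied to $R \to \widehat{R}$) that for a Noetherian local ring $R$ the contraction $\Ass(\widehat{R}) \to \Ass(R)$, $\widetilde{P} \mapsto \widetilde{P}\cap R$, is well-defined and surjective, and then the desired set equality drops out of $S \subseteq B \subseteq T$ in one line. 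The concrete payoff is real: your proof shows the height hypothesis is superfluous for this particular proposition, and $\Ass(S) = \{Q\cap S : Q \in \Ass(B)\}$ holds for any two local rings sharing a completion. What the paper's longer route buys is self-containment within its own lemma stack, at the cost of carrying the height bound. One small correction to your closing remark: the paper's proof of the reverse inclusion $\{Q\cap S : Q \in \Ass(B)\}\subseteq\Ass(S)$ also avoids the faithfully flat transfer of associated primes; it uses the same height-forcing trick via Lemma \ref{asssheight} to conclude that the zerodivisor prime $P = Q \cap S$ must itself lie in $\Ass(S)$, so the asymmetry you anticipated between the two directions is not actually present in the paper.
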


\begin{proof}
Let $\widehat{S} = \widehat{B} = T$, and suppose $P \in \Ass(S)$. Then $PT$ contains only zerodivisors of $T$, and so $PT$ is contained in an associated prime ideal $I$ of $T$. Let $J$ be a minimal prime ideal of $PT$ contained in $I$.  Then $J$ contains only zerodivisors of $T$ and, by Lemma \ref{small 2019}, $J\cap S = P$. Let $Q = J \cap B$. Then $Q \cap S = P$ and $Q$ contains only zerodivisors of $B$. It follows that $Q \subseteq Q'$ for some $Q' \in \Ass(B)$. Note that, by assumption, $\height (Q') \leq 1$. If $\height (Q) = 0$ then $Q \in \Ass(B)$.  If $\height (Q) = 1$, then $Q = Q' \in \Ass(B)$.  It follows that $\Ass(S) \subseteq \{Q \cap S \,| \,Q \in \Ass(B)\}.$




Now let $Q \in \Ass(B)$.  Then, by a similar argument as in the previous paragraph, there is a prime ideal $J$ of $T$ such that $J$ contains only zerodivisors of $T$ and $J \cap B = Q$. Let $P = Q \cap S$.  Then $P$ contains only zerodivisors of $S$. It follows that $P \subseteq P'$ for some $P' \in \Ass(S)$. By Lemma \ref{asssheight}, $\height P' \leq 1$. If $\height (P) = 0$ then $P \in \Ass(S)$.  If $\height (P) = 1$, then $P = P' \in \Ass(S)$.  Hence $\{Q \cap S \,| \,Q \in \Ass(B)\} \subseteq \Ass(S).$
\end{proof}

Proposition \ref{asssprop} justifies our claim made in the introduction that if the ring $B$ in Theorem \ref{bigassthm} has $n\geq 2$ associated prime ideals then the ring $S$ has $n - 1$ associated prime ideals.  In addition, it justifies our claim in the introduction that we can find local rings $S_1 \supseteq S_2 \supseteq \dots \supseteq S_{n}$, all with the same completion such that, if $\{Q_1, \dots, Q_{n}\}$ are the associated prime ideals of $S_1$, then $S_i$ has exactly $n - i + 1$ associated prime ideals and 
\[\Ass(S_i)=\{S_i \cap Q_1 = S_i \cap Q_2 = \dots = S_i \cap Q_{i}, S_i \cap Q_{i+1}, \dots, S_i \cap Q_{n}\}.\] 

Theorem \ref{bigassthm} provides insight into the relationship between the associated prime ideal structure of a local ring and the associated prime ideal structure of its completion.  Suppose $B$ is a complete local ring satisfying the conditions in Theorem \ref{bigassthm}.  The set $X = \Ass(B)$ is a finite partially ordered set with respect to inclusion.  Theorem \ref{bigassthm} offers a partial answer to the following question.  Suppose that $Y$ is a finite partially ordered set. Under what conditions is there a local ring $S$ with completion $B$ such that $\Ass(S)$, when viewed as a partially ordered set under inclusion, is the same as $Y$?  If all associated prime ideals of $B$ have height less than two, then Lemma \ref{asssheight} shows that $Y$ having dimension at most one is a necessary condition for such a ring $S$ to exist.  Similarly, Proposition \ref{asssprop} shows that $|Y| \leq |X|$ is also a necessary condition.  If $Y$ does satisfy these necessary conditions then Theorem \ref{bigassthm} shows that, if there is a way to construct $Y$ from $X$ by a series of ``gluing'' operations where, at each step, either two minimal nodes are glued together or a minimal node and a height one node where the minimal node is the only minimal node contained in the height one node are glued together, then such a ring $S$ exists. We illustrate with the following example.

\begin{example}
    Let $B=$
        \[\frac{\mathbb{C}[[X_1,X_2,X_3,X_4,X_5,X_6,X_7,X_8,X_9,X_{10}]]}{\langle X_1^2, X_1X_2\rangle \cap \langle X_3^2, X_3X_4 \rangle \cap \langle X_5^2, X_5X_6 \rangle \cap \langle X_7^2, X_7X_8 \rangle \cap \langle X_9^2, X_9X_{10} \rangle}\]
The ring $B$ has 10 associated prime ideals, with 5 of them minimal and the other 5 height one. Each minimal prime ideal is contained in exactly one height one associated prime ideal.  Let $Y$ be the partially ordered set consisting of 5 elements where one is minimal, four are height one, and the minimal node is contained in all of the height one nodes.  Note that it is possible to obtain $Y$ from $X = \Ass(B)$ by first gluing all of the minimal nodes together (two at a time), and then gluing one of the height one nodes to the minimal one.  It follows that Theorem \ref{bigassthm} can be applied to construct a local ring $S$ with completion $B$ such that $\Ass(S)$, when viewed as a partially ordered set, is the same as $Y$.


\end{example}



When applying Theorem \ref{bigassthm} multiple times to construct a chain of subrings of $B$, note that there exists an algorithm that will ensure the process will end with an integral domain.  We first glue all of the minimal prime ideals together (two at a time), and then repeatedly glue a height one associated prime ideal to the minimal prime ideal until there is only one associated prime ideal remaining.  By Theorem \ref{domain}, the last ring in the chain will be an integral domain.


    We end with observations about property \ref{*} that are put on $Q_1$ and $Q_2$ in Theorem \ref{bigassthm}. The next proposition provides insight into why condition (2) is part of property \ref{*}.

\begin{proposition}\label{three}
Let $(B,M)$ be a local ring and suppose $(S,S \cap M)$ is a local subring of $B$ such that $\widehat{S} = \widehat{B}$. Let $Q_2$ be a minimal prime ideal of $B$ and let $Q_1$ be a prime ideal of $B$. If $S \cap Q_2 = S \cap Q_1$ then, for any minimal prime ideal $Q$ of $B$ satisfying $Q \subseteq Q_1,$ we have $S \cap Q = S \cap Q_2 = S \cap Q_1$.
\end{proposition}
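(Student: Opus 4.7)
The plan is to set $P = S \cap Q_1 = S \cap Q_2$ and $P' = S \cap Q$, and reduce the problem to showing that $P$ and $P'$ are both minimal prime ideals of $S$. Since $Q \subseteq Q_1$ gives $P' \subseteq P$ for free, once both are known to be minimal in $S$, the equality $P' = P$ will follow immediately from the fact that a minimal prime cannot properly contain another prime ideal; this yields the desired conclusion $S \cap Q = S \cap Q_2 = S \cap Q_1$.

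To see that $P = S \cap Q_2$ is a minimal prime of $S$, I would take an arbitrary $P^\ast \in \Spec(S)$ with $P^\ast \subseteq P$ and apply Lemma \ref{going down bs}, which holds here precisely because $\widehat{S} = \widehat{B}$. Applying going down to the chain $P \supseteq P^\ast$ in $S$, together with $Q_2 \cap S = P$, produces a prime $Q^\ast \in \Spec(B)$ with $Q^\ast \subseteq Q_2$ and $Q^\ast \cap S = P^\ast$. Since $Q_2$ is a minimal prime of $B$, we must have $Q^\ast = Q_2$, hence $P^\ast = P$. The very same argument applied to $Q$ in place of $Q_2$ — using that $Q$ is also a minimal prime of $B$ — shows that $P' = S \cap Q$ is a minimal prime of $S$.

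Combining these three observations — $P' \subseteq P$, $P'$ minimal in $S$, and $P$ minimal in $S$ — forces $P' = P$, completing the proof. Honestly, I do not expect a genuine obstacle here: the hard work has already been done in Lemma \ref{going down bs}, and the key conceptual point is just that contraction along $S \hookrightarrow B$ sends minimal primes of $B$ to minimal primes of $S$ under our hypothesis on completions. The only minor thing to be careful about is citing Lemma \ref{going down bs} in the correct direction (a chain in $S$ together with a prime of $B$ lying over its top), which is exactly the form it is stated in.
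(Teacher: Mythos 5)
Your proof is correct and follows the same strategy as the paper's: use Lemma \ref{going down bs} to deduce that $P = S \cap Q_2$ is a minimal prime ideal of $S$, then conclude from $S \cap Q \subseteq P$ that $S \cap Q = P$. One small observation: your second application of going down, to show that $P' = S \cap Q$ is also a minimal prime of $S$, is redundant --- once you know $P' \subseteq P$ and $P$ is a minimal prime of $S$, the equality $P' = P$ follows immediately, which is exactly what the paper does.
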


\begin{proof}
By Lemma \ref{going down bs}, the going down property holds between $S$ and $B$. As a consequence, since $Q_2$ is a minimal prime ideal of $B$, $S \cap Q_2 = S \cap Q_1$ is a minimal prime ideal of $S$.  Therefore, as $Q \subseteq Q_1$, we have $S \cap Q \subseteq S \cap Q_1$, and it follows that $S \cap Q = S \cap Q_1$.
\end{proof}

Suppose that $Q_1$ is a prime ideal of $B$ of positive height and $Q_2$ is a minimal prime ideal of $B$. A consequence of Proposition \ref{three} is that, to ``glue'' only $Q_1$ and $Q_2$ (with $Q_1$ not glued to any other minimal prime ideals of $B$), it is necessary that $Q_2$ be one of the minimal prime ideals contained in $Q_1$ and, moreover, that it be the only minimal prime ideal contained in $Q_1$. Thus, in the case that $Q_2$ is minimal and $Q_1$ is not, condition (2) of property \ref{*} is necessary in Theorem \ref{bigassthm}.


Our final proposition shows that there are cases where two associated prime ideals of height one cannot be glued together in Theorem \ref{bigassthm}.

\begin{proposition}\label{ht1-no-no}
Let $(B,M)$ be a local ring and suppose $(S,S \cap M)$ is a local subring of $B$ such that $\widehat{S} = \widehat{B}$. Let $Q_1, Q_2\in \Spec(B)$ with $\height(Q_1)=\height(Q_2)=1$ and $S \cap Q_1 = S \cap Q_2$. Suppose $P_2$ is a minimal prime ideal of $B$ such that $P_2 \subseteq Q_2$ and $P_2 \not\subseteq Q_1$. Then there exists a minimal prime ideal $P_1$ of $B$ such that $P_1 \subseteq Q_1$ and $S \cap P_1 = S \cap P_2$.
\end{proposition}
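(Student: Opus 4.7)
\medskip

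The plan is to leverage the going down property between $S$ and $B$ established in Lemma \ref{going down bs}, and combine it with the height-one constraint on $Q_1$ to locate the required minimal prime $P_1$ by contraction.

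\medskip

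First I would observe that $S \cap P_2$ is a minimal prime ideal of $S$. This follows from going down: if some prime $\mathfrak{q}$ of $S$ were strictly contained in $S \cap P_2$, then applying going down to the chain $\mathfrak{q} \subsetneq S \cap P_2$ in $S$ together with $P_2$ lying over $S \cap P_2$, we would produce a prime $\mathfrak{p} \subsetneq P_2$ of $B$ with $\mathfrak{p} \cap S = \mathfrak{q}$, contradicting the minimality of $P_2$ in $B$. By the same reasoning, for any minimal prime $P$ of $B$, $S \cap P$ is a minimal prime of $S$.

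\medskip

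Next, since $S \cap P_2 \subseteq S \cap Q_2 = S \cap Q_1$ and $Q_1 \cap S = S \cap Q_1$, going down produces a prime ideal $P_1'$ of $B$ with $P_1' \subseteq Q_1$ and $P_1' \cap S = S \cap P_2$. Because $\height(Q_1) = 1$, either $P_1'$ is a minimal prime ideal of $B$, in which case we simply set $P_1 := P_1'$ and are done, or else $P_1' = Q_1$. In the latter case we have the equality $S \cap Q_1 = S \cap P_2$, and I would then pick any minimal prime ideal $P_1$ of $B$ contained in $Q_1$ (such a $P_1$ exists since $Q_1$ is prime). By the first paragraph, $S \cap P_1$ is a minimal prime of $S$, and clearly $S \cap P_1 \subseteq S \cap Q_1 = S \cap P_2$; since $S \cap P_2$ is also minimal in $S$, equality must hold, giving $S \cap P_1 = S \cap P_2$.

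\medskip

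The main (very mild) obstacle is the case analysis on whether the prime produced by going down is itself minimal or instead coincides with $Q_1$; the height one assumption on $Q_1$ is exactly what makes this dichotomy exhaustive, and the minimality-preservation of contraction under going down is exactly what handles the second case. Note that the hypothesis $P_2 \not\subseteq Q_1$ plays no role in producing $P_1$, but is what prevents the trivial choice $P_1 = P_2$ and thereby makes the conclusion a genuine obstruction to gluing in the height one setting.
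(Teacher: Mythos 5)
Your proof is correct, and it relies on the same engine as the paper's: Lemma \ref{going down bs} (going down between $S$ and $B$), combined with the observation that contraction along a going-down extension carries minimal primes to minimal primes. The paper's own proof is a single terse sentence that invokes going down and asserts outright that one obtains a \emph{minimal} prime $P_1$; the justification for minimality is left implicit. You supply that missing step carefully, via the dichotomy on the prime $P_1'$ that going down produces.

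One small remark: your case analysis uses $\height(Q_1)=1$ to force $P_1'$ to be either minimal or equal to $Q_1$, but the height hypothesis is not actually needed here. In general one can replace $P_1'$ by \emph{any} minimal prime $P_1$ of $B$ contained in $P_1'$; then $S \cap P_1 \subseteq S \cap P_1' = S \cap P_2$, and since you have already shown $S \cap P_2$ is a minimal prime of $S$, equality $S \cap P_1 = S \cap P_2$ follows. This collapses your two cases into one and shows the proposition holds without any height restriction on $Q_1$. Your closing observation — that $P_2 \not\subseteq Q_1$ is not used in the argument but is what makes the conclusion a genuine obstruction rather than a triviality — is accurate and matches the role the hypothesis plays in the paper's discussion following the proposition.
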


\begin{proof}
Note that $S \cap P_2 \subseteq S \cap Q_2 = S \cap Q_1$.  Since by Lemma \ref{going down bs}, the going down property holds between $S$ and $B$, there exists a minimal prime ideal $P_1$ of $B$ such that $P_1 \subseteq Q_1$ and $S \cap P_1 = S \cap P_2$.
%
\end{proof}





Recall that, in Theorem \ref{bigassthm}, once can choose $Q_1$ and $Q_2$ to be any minimal prime ideals of $B$. A consequence of Proposition \ref{ht1-no-no} is that one cannot arbitrarily choose two height one associated prime ideals of $B$ as $Q_1$ and $Q_2$ in Theorem \ref{bigassthm}. In particular, it is a necessary condition in this case that $Q_1$ and $Q_2$ contain exactly the same minimal prime ideals of $B$.





\end{document}